\documentclass[11pt]{article}

\usepackage{amsmath}
\usepackage{amssymb}
\usepackage{amsthm} 
\usepackage{mathrsfs}
\usepackage[dvipdfmx]{graphicx,color}
\setlength{\topmargin}{0cm}
\setlength{\headheight}{0cm}
\setlength{\textheight}{22cm}
\setlength{\textwidth}{14cm}
\setlength{\oddsidemargin}{1cm}
\setlength{\evensidemargin}{1cm}
\nopagebreak[3]


\newcommand{\newsection}[1]
{\section{#1}\setcounter{theorem}{0} \setcounter{equation}{0} \par\noindent}

\newtheorem{theorem}{Theorem}

\newtheorem{lemma}[theorem]{Lemma}

\newtheorem{proposition}[theorem]{Proposition}

\newcommand{\beq}{ \begin{equation} }
\newcommand{\eeq}{ \end{equation} }

\newcommand{\br}{{\mathbb R}}

\newcommand{\supp}{\mbox{\rm supp}\, }

\newcommand{\diag}{\mbox{\rm diag}\, }

\newcommand{\brn}{ { \mathbb{R}^n } }

\newcommand{\real}{ \,\mbox{\rm{Re}}\, }

\title{
The Cauchy problem for the Klein-Gordon equation 
\\
under the quartic potential in the de Sitter spacetime
}
\author
{
Makoto NAKAMURA
\thanks
{
Faculty of Science, Yamagata University, 
Kojirakawa-machi 1-4-12, Yamagata 990-8560, JAPAN.
E-mail: \texttt{nakamura@sci.kj.yamagata-u.ac.jp}
}
}
\date{}

\begin{document}

\maketitle

\begin{abstract}
The Cauchy problem for the Klein-Gordon equation under the quartic potential is considered in the de Sitter spacetime.
The existence of the global solution is shown based on the mechanism of the spontaneous symmetry breaking for the small positive Hubble constant.
The effects of the spatial expansion and contraction on the problem are considered.
\end{abstract}

\noindent
{\it Mathematics Subject Classification (2020)}: 
Primary 35L05; Secondary 35L15, 35Q75. \\

\noindent
{\it Keywords}: 
Klein-Gordon equation, 
Cauchy problem,
de Sitter spacetime.

\newsection{Introduction}
We consider the Cauchy problem for the Klein-Gordon equation under the quartic potential  
in the de Sitter spacetime.
The de Sitter spacetime is   
the solution of the Einstein equations 
with the cosmological constant in the vacuum 
under the cosmological principle.
We use the following convention.
Let $n\ge1$.
The Greek letters $\alpha, \beta,\gamma,\cdots$ run from $0$ to $n$, 
and the Latin letters $j,k,\ell, \cdots$ run from $1$ to $n$.
We use the Einstein rule for the sum of indices, 
namely, the sum is taken for the same upper and lower repeated indices, 
for example, 
${T^\alpha}_\alpha:=\sum_{\alpha=0}^n {T^\alpha}_\alpha$ 
and 
${T^j}_j:=\sum_{j=1}^n {T^j}_j$ for any tensor ${T^\alpha}_\beta$.
Let $H\in \br$ be the Hubble constant, 
$c>0$ be the speed of the light.
The de Sitter spacetime that we consider in this paper is the spacetime with the metric $\{g_{\alpha\beta}\}$ given by 
\begin{equation}
\label{Intro-RW}
-c^2(d\tau)^2
=
g_{\alpha\beta}dx^\alpha dx^\beta
:=
-c^2(dx^0)^2+
e^{2H x^0}
\sum_{j=1}^n (dx^j)^2,
\end{equation}
where we have put the spatial curvature as $0$,  
the variable $\tau$ denotes the proper time, 
$x^0=t$ is the time-variable 
(see e.g., 
\cite{Carroll-2004-Addison, DInverno-1992-Oxford}).
When $H=0$, the spacetime with \eqref{Intro-RW} reduces to the Minkowski spacetime.


For the imaginary number $i$ with $i^2=-1$,
let $m_\ast\in \br \cup i\br$ denote 
the real mass when $m_\ast\in \br$, 
the imaginary mass when $m_\ast\in i\br$.
Let $\hbar>0$ denote the reduced Planck constant. 
The Lagrangian density  $L$ of the Klein-Gordon field with a potential $V$ given by 
\beq
\label{Def-L-V}
L(\phi):=
-\frac{1}{2}\overline{\partial^\alpha \phi} \partial_\alpha \phi
-
V(\phi),
\ \ 
V(\phi):=
\frac{1}{2}
\left(\frac{m_\ast c}{\hbar}\right)^2|\phi|^2
+
\frac{\lambda}{p+1}|\phi|^{p+1},
\eeq
yields the semilinear Klein-Gordon equation 
\begin{equation}
\label{KG-Cauchy}
\left(
\frac{1}{c^{2}}\partial_t^2\phi
+\frac{nH}{c^2}\partial_t\phi
-e^{-2Ht}
\Delta \phi
+\left(\frac{m_\ast c}{\hbar}\right)^2 \phi
+
\lambda |\phi|^{p-1}\phi
\right)(t,x)
=0, 
\end{equation}
for $(t,x)\in [0,T)\times\br^n$, 
where 
$0< T\le\infty$,
$1<p<\infty$, 
$\lambda\in \br$, 
$\Delta$ denotes the Laplacian defined by $\Delta:=\sum_{j=1}^n (\partial/\partial x^j)^2$,
$\partial^\alpha:=g^{\alpha\beta}\partial_\beta$,
and 
the matrix $(g^{\alpha\beta})$ denotes the inverse matrix $(g_{\alpha\beta})$. 
The equation \eqref{KG-Cauchy} is rewritten as 
\beq
\label{Eq-u-GaugeVariant}
c^{-2} \partial_t^2u-e^{-2Ht}\Delta u
+
\left\{\left(\frac{m_\ast c}{\hbar}\right)^2-\left(\frac{nH}{2c}\right)^2\right\}
u
+
\lambda e^{-n(p-1)Ht/2} |u|^{p-1}u=0
\eeq
by the change of $\phi$ to $u:=e^{nHt/2}\phi$.

Let us consider the real mass $m_\ast\in \br$ 
on the Cauchy problem of \eqref{KG-Cauchy} for data 
$\phi_0(\cdot):=\phi(0,\cdot)$ and $\phi_1(\cdot):=\partial_t\phi(0,\cdot)$. 
Yagdjian \cite{Yagdjian-2012-JMAA} has shown 
small global solutions for \eqref{KG-Cauchy}, 
provided by that the norm of initial data $\|\phi_0\|_{H^s(\brn)}+\|\phi_1\|_{H^s(\brn)}$ is sufficiently small for $s>n/2\ge 1$  
(see also  \cite{Yagdjian-2013-Springer} for the system of the equations), 
where $H^s(\mathbb{R}^n)$ denotes the Sobolev space of order $s$. 
Galstian and Yagdjian 
\cite{Galstian-Yagdjian-2017-NA-RWA, 
Yagdjian-2019-JMP} 
have extended 
this result to the case of the Riemann metric space for each time slices.
In \cite{Nakamura-2014-JMAA}, the energy solutions 
for $\phi_0\in H^1(\brn)$ and $\phi_1\in L^2(\brn)$ 
have been shown, 
which was extended to the case of general Friedmann-Lema\^itre-Robertson-Walker spacetime 
in \cite{Galstian-Yagdjian-2015-NA-TMA}.
Baskin \cite{Baskin-2012-AHP} has shown small global solution for the equation 
$(\square_g+\lambda) \phi+f(\phi)=0$ 
when $f(\phi)$ is a type of $|\phi|^{p-1} \phi$,  
$p=1+4/(n-1)$, $\lambda>n^2/4$, $\phi_0\in H^1$ and $\phi_1\in L^2$, 
where $g$ denotes the metric of the asymptotic de Sitter spacetime 
and $\square_g$ denotes the d'Alembertian on $g$  
(see also \cite{Baskin-2010-ProcCMAANU} in the cases $\lambda=(n^2-1)/4$, $p=5$ with $n=3$, $p=3$ with $n=4$). 
This result was further investigated on the semilinear term including the derivatives of the solution 
by Hintz and Vasy  
\cite{Hintz-Vasy-2015-AnalysisPDE}.
We refer to 
\cite{Tsuchiya-Nakamura-2019-JCompApplMath} 
on numerical computations for the semiliear Klein-Gordon equation, 
and 
\cite{Nakamura-2015-JDE,Nakamura-2021-JDE} 
on the Cauchy problem  
for 
the semilinear Schr\"odinger equation 
and 
the semilinear Proca equation 
in the de Sitter spacetime.

On the blowing-up solution of \eqref{Eq-u-GaugeVariant} with the gauge variant semilinear term,
Yagdjian \cite[Theorem 1.1]{Yagdjian-2009-DCDS} considered the equation  
\[
\partial_t^2 u(t,\cdot)-e^{-2t}\Delta u(t,\cdot)
+
\left\{m_\ast^2-\left( \frac{n}{2}\right)^2\right\} u(t,\cdot)
-\Gamma(t)
\left(\int_{\brn}|u(t,y)|^p dy \right)^{\beta} |u(t,\cdot)|^p
=0
\]
under the normalization of $H=c=\hbar=1$,
and has shown that $\int_\brn u\, dx$ blows up in finite time for some small data 
when $0\le m_\ast\le n/2$ and the non-decreasing or non-increasing function 
$\Gamma\in C^1([0,\infty))$ satisfies 
\beq
\label{Gagdjian-Gamma}
\Gamma(t)
\ge
\begin{cases}
Ce^{-\sqrt{(n/2)^2-m_\ast^2}(p(\beta+1)-1)t}t^{2+\varepsilon} & \mbox{if}\ \ m_\ast<\frac{n}{2}, 
\\
Ct^{-1-p(\beta+1)} & \mbox{if}\ \ m_\ast=\frac{n}{2}
\end{cases}
\eeq
for $t\ge0$, 
where $p>1$, $\beta>1/p-1$, $\varepsilon>0$ and $C>0$.
We remark that when 
$\beta=0$,  
the weighted function  
$\Gamma(t)=e^{-n(p-1)t/2}$ in \eqref{Eq-u-GaugeVariant} can be taken for $m_\ast^2<0$,
namely, for the purely imaginary number $m_\ast\in i\br$ with $m\neq0$.
He has also shown the estimate of the existence time of the solution from below 
in the Sobolev space $H^s(\brn)$ under the conditions $s>n/2\ge 1$ 
in \cite[Theorem 0.1]{Yagdjian-2019-JMP} when $H=1$, 
and in \cite[Theorem 0.2]{Galstian-Yagdjian-2020-NoDEA} when $H=-1$ 
with Galstian 
(see also the references in the summary \cite{Yagdjian-9999-CurvedSpacetime}).
We refer to 
\cite{Balogh-Banda-Yagdjian-2019-CommNonlinearSciNumerSimulat} 
for the numerical simulation.

Firstly, we give the following result on the blowing-up solution in finite time 
under some conditions on data for the imaginary mass $m_\ast\in i\br$.
The result for $H>0$ and $m_\ast\neq0$ is due to Yagdjian \cite{Yagdjian-2009-DCDS} 
since his method is also applicable to the imaginary mass  
(see the introduction in \cite{Balogh-Banda-Yagdjian-2019-CommNonlinearSciNumerSimulat}).
We extend it to the case $H<0$ and $m_\ast=0$.

\begin{proposition}
\label{Thm-BlowUp}
Let $n\ge1$, $1<p<\infty$, $m_\ast\in i\br$.
Let $H>0$ or $H<0$.
Let $u_0, u_1\in L^1(\brn)$ be the functions which have compact supports and satisfy 
$w_0:=\int_{\brn} u_0(x) dx\ge0$, $w_1:=\int_{\brn} u_1(x)dx>0$, and 
\[
w_1\ge c w_0
\left\{
-\left(\frac{m_\ast c}{\hbar}\right)^2+\left(\frac{nH}{2c}\right)^2
\right\}^{1/2}.
\]
Let $w_0>0$ when $m_\ast=0$.
Then the solution of the Cauchy problem 
\beq
\label{Cauchy-BlowUp}
\begin{cases}
c^{-2}\partial_t^2u-e^{-2Ht}\Delta u
+
\left\{\left(\frac{m_\ast c}{\hbar}\right)^2-\left(\frac{nH}{2c}\right)^2\right\} u
-
e^{-n(p-1)Ht/2} |u|^p=0, \\
u(0,\cdot)=u_0(\cdot),\ \ \partial_t u(0,\cdot)=u_1(\cdot)
\end{cases}
\eeq
blows up in finite time in the space $L^1(\brn)$.
Namely, $\int_{\brn} u(t,x)dx$ blows up as $t\to T$ for some positive number $0<T<\infty$. 
\end{proposition}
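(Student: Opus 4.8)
The plan is to convert the PDE into an ordinary differential inequality for the spatial average
\[
w(t):=\int_{\brn}u(t,x)\,dx
\]
and to show that the hypotheses on the data force $w$ to become infinite in finite time. First I would note that the lower–order and nonlinear terms do not affect the characteristics, so the principal part $c^{-2}\partial_t^2-e^{-2Ht}\Delta$ of the equation in \eqref{Cauchy-BlowUp} is a wave operator with the spatially constant propagation speed $ce^{-Ht}$; a finite–propagation–speed (energy) argument then gives $\supp u(t,\cdot)\subset\{|x|\le R(t)\}$ with $R(t):=R_0+c\int_0^t e^{-Hs}\,ds$, where $R_0$ bounds $\supp u_0\cup\supp u_1$. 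For $H>0$ one has $R(t)\le R_0+c/H$, bounded; for $H<0$, $R(t)$ grows like $(c/|H|)e^{|H|t}$. In particular $|\supp u(t,\cdot)|\le\omega_nR(t)^n$ with $\omega_n$ the volume of the unit ball in $\brn$. Integrating \eqref{Cauchy-BlowUp} over $\brn$ and using that the Laplacian term integrates to zero,
\[
c^{-2}w''(t)+\Big\{(m_\ast c/\hbar)^2-(nH/2c)^2\Big\}w(t)=e^{-n(p-1)Ht/2}\int_{\brn}|u(t,x)|^p\,dx .
\]
Since $m_\ast\in i\br$ and we are not in the excluded case $m_\ast=H=0$, the bracketed constant is negative; put $\kappa:=c\{-(m_\ast c/\hbar)^2+(nH/2c)^2\}^{1/2}>0$, so that $\kappa^2=-c^2\{(m_\ast c/\hbar)^2-(nH/2c)^2\}$. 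By Hölder's inequality on the compact support, $\int_{\brn}|u|^p\ge|\supp u(t,\cdot)|^{-(p-1)}|w(t)|^p$, whence
\[
w''(t)\ \ge\ \kappa^2w(t)+c^2\,e^{-n(p-1)Ht/2}\,|\supp u(t,\cdot)|^{-(p-1)}\,|w(t)|^p .
\]

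The next step is a bootstrap showing that $w$ stays positive, increasing and convex with $w'\ge\kappa w$. The hypothesis on the data reads exactly $w'(0)-\kappa w(0)=w_1-\kappa w_0\ge0$. As long as $w\ge0$ the displayed inequality gives $w''\ge\kappa^2w\ge0$; moreover $h:=w'-\kappa w$ satisfies $h'=w''-\kappa w'\ge\kappa^2w-\kappa w'=-\kappa h$, so $(e^{\kappa t}h)'\ge0$ and $h(t)\ge h(0)\ge0$. Together with $w(0)=w_0\ge0$ and $w'(0)=w_1>0$ this keeps $w$ positive and increasing on the whole interval of existence, so the bounds $w>0$, $w'\ge\kappa w\ge0$, $w''\ge\kappa^2w$ propagate. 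Comparing $w$ with the solution $z$ of $z''=\kappa^2z$, $z(0)=w_0$, $z'(0)=w_1$ (the difference obeys $(w-z)''\ge\kappa^2(w-z)$ with vanishing data, hence $w\ge z$), one obtains $w(t)\ge z(t)\ge c_\ast e^{\kappa t}$ for $t$ large, with $c_\ast=\tfrac12(w_0+w_1/\kappa)>0$; this, and the extra hypothesis $w_0>0$ in the case $m_\ast=0$, are what make the $e^{\kappa t}$–part of $z$ genuinely present.

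The decisive manoeuvre is the substitution $v(t):=e^{-\kappa t}w(t)$, which is nondecreasing (since $v'=e^{-\kappa t}h\ge0$) and bounded below by a positive constant for $t$ large. Using $w''-\kappa^2w=e^{\kappa t}(v''+2\kappa v')$ and $|w|^p=e^{p\kappa t}v^p$, the inequality becomes
\[
v''(t)+2\kappa v'(t)\ \ge\ c^2\,e^{-n(p-1)Ht/2}\,|\supp u(t,\cdot)|^{-(p-1)}\,e^{(p-1)\kappa t}\,v(t)^p .
\]
Here the exponents cancel: for $H>0$ the support factor is bounded below and $-\tfrac{n(p-1)H}{2}+(p-1)\kappa=(p-1)\big(\kappa-\tfrac{nH}{2}\big)\ge0$ because $\kappa\ge nH/2$; for $H<0$ one has $\kappa=n|H|/2$ and $|\supp u(t,\cdot)|^{-(p-1)}\gtrsim e^{-n(p-1)|H|t}$, so the three exponential factors multiply to a positive constant. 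In either case the right–hand side is $\ge Cv^p$ with $C>0$ for $t\ge t_0$, and one is reduced to the autonomous–type inequality $v''+2\kappa v'\ge Cv^p$ for a solution $v$ with $v(t_0)>0$, $v'(t_0)\ge0$, $p>1$. This forces finite–time blow–up: integrating $(e^{2\kappa t}v')'\ge Ce^{2\kappa t}v^p$ shows $v'$ is bounded below by a positive constant (so $v\to\infty$), and then testing against $\psi:=v'-\eta v^{(p+1)/2}$ for a suitably small $\eta>0$ shows that $\psi$ cannot return to zero once $v$ is large, since there $\psi'\ge\tfrac{C}{2}v^p-2\kappa\eta v^{(p+1)/2}>0$; hence $v'\ge\eta v^{(p+1)/2}$ eventually, and since $(p+1)/2>1$ this integrates to $v(t)^{-(p-1)/2}\le v(t_0)^{-(p-1)/2}-\tfrac{(p-1)\eta}{2}(t-t_0)$, so $v$ — and therefore $w(t)\le\|u(t,\cdot)\|_{L^1(\brn)}$ — becomes infinite at some finite $T$, which is the asserted $L^1$ blow–up.

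I expect the hard part to be the borderline case $H<0$, $m_\ast=0$ (the genuinely new one; for $H>0$, $m_\ast\ne0$ the argument is essentially that of Yagdjian \cite{Yagdjian-2009-DCDS}). There the light cones open exponentially, so the support volume must be controlled in its sharp form $|\supp u(t,\cdot)|\le\omega_nR(t)^n$ and the cancellation of exponents in the third display is \emph{exact}, with no decaying slack to absorb cruder bounds — in contrast to the $H>0$, $m_\ast\ne0$ case, where $\kappa>nH/2$ actually leaves a growing factor $e^{(p-1)(\kappa-nH/2)t}$ on the right. For the same reason one cannot afford to lose the exact constant in $w'\ge\kappa w$, so the lower bound on $w$, and hence the precise form $w_1\ge c\{-(m_\ast c/\hbar)^2+(nH/2c)^2\}^{1/2}w_0$ of the hypothesis, enter in an essential way. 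The remaining ingredients are routine manipulations of ordinary differential inequalities.
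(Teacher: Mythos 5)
Your proposal is correct, and its first half coincides with the paper's proof: integrate the equation over $\brn$ to get $c^{-2}w''+Qw=e^{-n(p-1)Ht/2}\int_{\brn}|u|^p\,dx$, use finite propagation speed ($r(t)=r_0+c(1-e^{-Ht})/H$) and H\"older to obtain the ordinary differential inequality, and use the hypothesis $w_1\ge cMw_0$ to propagate $w'\ge\kappa w$ (the paper gets this by multiplying by $\partial_t w$ and integrating; your Gronwall argument for $h=w'-\kappa w$ is equivalent). Where you genuinely diverge is the concluding blow-up mechanism. The paper multiplies the inequality by $\partial_t w$ and integrates with the time-dependent weight $b(t)$, which forces it to verify $b'\le0$ for large $t$, then upgrades the growth rate from $M$ to $M_1=(M^2+Bw_0^{p-1}/(p+1))^{1/2}$ --- this is exactly where the extra hypothesis $w_0>0$ in the massless case enters, since $M_1>M=n|H|/2c$ is needed --- and finally uses an $\varepsilon$-interpolation to reach a first-order superlinear inequality $\partial_t w\gtrsim w^{1+\varepsilon(p-1)/2}$. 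You instead substitute $v=e^{-\kappa t}w$, observe that the exponential weights cancel (this only needs $\kappa\ge n|H|/2$, which holds since $m_\ast\in i\br$), and reduce to the constant-coefficient damped inequality $v''+2\kappa v'\ge Cv^p$, for which a standard two-step argument ($v'$ eventually bounded below, then $v'\ge\eta v^{(p+1)/2}$ via the auxiliary function $\psi$) gives finite-time blow-up. Your route avoids both the monotonicity of $b$ and, notably, the hypothesis $w_0>0$ when $m_\ast=0$: your lower bound $v\ge c_\ast/2>0$ comes from $w_1>0$ alone via the comparison solution $z$, so you in fact prove a slightly stronger statement; the paper's route, by contrast, explicitly tracks the improved exponential rate $M_1$. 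Two small imprecisions, neither fatal: the sentence ``for $H<0$ one has $\kappa=n|H|/2$'' is true only in the massless sub-case (in general $\kappa\ge n|H|/2$, which is all you use, and extra growth only helps); and your closing remark that $w_0>0$ is essential for the massless case does not apply to your own argument, only to the paper's.
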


\vspace{10pt}

We have shown the blowing-up solution for the gauge variant semilinear term 
with the negative sign in \eqref{Cauchy-BlowUp}.
Next, we consider the gauge invariant case with the positive sign $\lambda>0$, 
and we show the global solution for small data when $H>0$ is small.
Put $m_\ast=im$ in \eqref{Def-L-V} with $m\in \br$, $p=3$ and $\lambda>0$. 
The potential $V$ is rewritten as  
\[
V(\phi)
=
-
\frac{1}{2}
\left(\frac{mc}{\hbar}\right)^2|\phi|^2
+
\frac{\lambda}{4}|\phi|^4.
\]
This potential is known as the double well potential or the Mexican hat potential, and it has the minimum when 
\beq
\label{Def-r0}
|\phi|=r_0:=\frac{|m|c}{\sqrt{\lambda} \hbar},
\eeq
while $\phi=0$ gives the local maximum for $m\neq0$. 
It is expected that the solution around $\phi=0$ is unstable, and it is stable around $|\phi|=r_0$,
which causes the spontaneous symmetry breaking 
from $\phi=0$ to $|\phi|=r_0$.
In this paper, we characterize this breaking from the viewpoint of the Cauchy problem of 
\eqref{KG-Cauchy} which is rewritten as  
\beq
\label{EL-Eq}
\frac{1}{c^2}\partial_t^2 \phi
+\frac{nH}{c^2}\partial_t \phi
-e^{-2Ht}\Delta \phi
-\left(\frac{mc}{\hbar}\right)^2 \phi+\lambda |\phi|^2\phi=0
\eeq
(see \cite[Chapter 17]{Elbaz-1998-Springer} in the case of $H=0$).
This equation \eqref{EL-Eq} is transformed into 
\[
\frac{1}{c^2}\partial_t^2\phi+\frac{nH}{c^2}\partial_t \phi-e^{-2Ht} \Delta \phi
+
\lambda
\left\{
|\phi|^2+r_0(2\phi\real \phi+|\phi|^2)+2r_0^2\real \phi
\right\}
\]
by the shift $\phi\to \phi+r_0$ 
(see Lemma \ref{Lem-2}, below).
This equation is rewritten as 
\beq
\label{Cauchy}
c^{-2}\partial_t^2 u-e^{-2Ht} \Delta u
-\left(\frac{nH}{2c}\right)^2 u
+2\left(\frac{mc}{\hbar}\right)^2\real u
+h(u)=0
\eeq
by the change of the function $u=e^{nHt/2}\phi$ 
(see Lemma \ref{Lem-3}), 
where we have put 
\beq
\label{Def-h}
h(u):=
\lambda|u|^2u e^{-nHt} 
+
\lambda r_0(2u\real u+|u|^2) e^{-nHt/2}.
\eeq
We consider the Cauchy problem of \eqref{Cauchy} for the initial data given by 
$u(0,\cdot)=u_0(\cdot)$ and $\partial_t u(0,\cdot)=u_1(\cdot)$, 
and we show the problem is well-posed.
We observe how the Hubble constant affects on the problem.

The mechanism of the spontaneous symmetry breaking is used in the study of phase transitions 
(see  \cite{Huang-1988-JohnWiley}).
Faccioli and Salasnich 
\cite{Faccioli-Salasnich-2018-Symmetry} 
considered it for the Gross-Pitaevskii equation and also for the cubic nonlinear Klein-Gordon equation, 
and studied the spectrum of the superfluid phase of bosonic gases.
Honda and Choptuik \cite{Honda-Choptuik-2002-PhysRevD} considered the 
monotonically increasingly boosted coordinates with $n=3$ in \eqref{EL-Eq} to study localized and unstable solutions like {\it oscillon}.  
The equation \eqref{EL-Eq} reduces to the equation 
\[
\partial_t^2\phi-\partial_x^2\phi
-
\phi+
\phi^3=0,
\]
which has the potential $V(\phi)=-\phi^2/2+\phi^4/4$, 
when $H=0$ and $n=m=c=\hbar=\lambda=1$ for the real-valued function $\phi$.
This equation has the kink solution $\phi=\tanh (x/\sqrt{2})$ 
and appears in the $\phi^4$-theory,
which has been considered in the statistical mechanics, 
the condensed-matter physics, 
and the topological quantum field theory 
(see 
\cite{Barashenkov-Oxtoby-2009-PhysRevE, 
Bishop-Schnerider-1978-Springer, 
Rajarman-1982-NorthHolland}).

We denote the Lebesgue space by $L^q(I)$ for an interval $I\subset \br$ 
and $1\le q\le \infty$ with the norm 
\[
\|u\|_{L^q(I)}:=
\begin{cases}
\left\{
\int_I |u(t)|^q dt
\right\}^{1/q} & \mbox{if}\ \ 1\le q<\infty,
\\
{\mbox{ess.}\sup}_{t\in I} |u(t)| 
& \mbox{if}\ \ q=\infty.
\end{cases}
\]
We use the Sobolev space $H^\mu(\brn)$, the homogeneous Sobolev space $\dot{H}^\mu(\brn)$, 
and the homogeneous Besov space 
$\dot{B}^\mu_{r,s}(\brn)$ of order $\mu\ge0$ for $1\le r, s\le \infty$ 
(see \cite{Bergh-Lofstrom-1976-Springer} for their definitions).

%

We consider the case $H\ge0$.
For $\mu\ge0$, $0<T\le\infty$ and $Q\ge0$, 
define the function space $X^\mu$ given by 
\begin{eqnarray*}
\|u\|_{\dot{X}^\mu(T)}
&:=&
c^{-1}\|\partial_t u\|_{L^\infty((0,T),\dot{H}^\mu(\brn))}
+
\|e^{-Ht}\nabla u\|_{L^\infty((0,T),\dot{H}^\mu(\brn))}
\\
&&
+
\sqrt{Q} \|u\|_{L^\infty((0,T),\dot{H}^\mu(\brn))}
+
\sqrt{H} \|e^{-Ht}\nabla u\|_{L^2((0,T),\dot{H}^\mu(\brn))},
\\
X^\mu(T)
&:=&
\{u;\ \|u\|_{\dot{X}^\nu}<\infty\ \ \mbox{for}\ \ \nu=0,\mu_0,\mu\},
\\
\dot{D}^\mu
&:=&
c^{-1}\|u_1\|_{\dot{H}^\mu(\brn)}
+
\|\nabla u_0\|_{\dot{H}^\mu(\brn)}
+
\sqrt{Q} \|u_0\|_{\dot{H}^\mu(\brn)}.
\end{eqnarray*}
We define the metric in $X^\mu(T)$ by $d(u,v):=\|u-v\|_{\dot{X}^0(T)}$ for $u,v\in X^\mu(T)$.
We have the following results on the existence of local and global solutions, and the asymptotic behaviors of global solutions.

\begin{theorem}
\label{Thm-1}
Let $n\ge1$, $m\in \br$, $0\le H<2\sqrt{2} |m|c^2/n\hbar$.
Put 
\beq
\label{Def-Q}
Q:=2\left(\frac{mc}{\hbar}\right)^2-\left(\frac{nH}{2c}\right)^2.
\eeq
Let $\lambda>0$, and let $r_0$ be defined by \eqref{Def-r0}.
Let $\mu_0$ and $\mu$ satisfy 
\beq
\label{Assume-Mu}
\max\left\{0,\frac{n-3}{2}\right\}\le \mu_0<n/2, \ \ \mu_0\le \mu<\infty.
\eeq
Then the following results hold.

(1) 
For any real-valued functions $u_0\in H^{\mu+1}(\brn)$ and $u_1\in H^\mu(\brn)$, 
there exist $0<T<\infty$ and a unique solution 
$u\in C([0,T),H^{\mu+1}(\brn))\cap C^1([0,T),H^\mu(\brn))\cap X^\mu(T)$
of \eqref{Cauchy}. 

(2) 
The solution $u$ obtained in (1) is continuously dependent on the data.
Namely, 
$d(u,v)\to 0$ as $v_0\to u_0$ in ${H}^1(\brn)$ and 
$v_1\to u_1$ in $L^2(\brn)$ for the solution $v$ obtained in (1) 
for the data $v_0:=v(0,\cdot)$ and $v_1:=\partial_t v(0,\cdot)$.

(3) 
Assume that the following (i) or (ii) holds. 
Then the solution $u$ obtained in (1) is the global solution 
if $\dot{D}^{\mu_0}$ is sufficiently small.
Namely, $T=\infty$.
 
(i) $H>0$, $\mu_0=0$, $n\ge4$. 

(ii) $H>0$, $\mu_0>0$, $n\ge1$.

(4) 
For the global solution $u$ obtained in (3), 
put 
\begin{eqnarray}
&&
u_{+0}:=u_0+c^2\int_0^\infty K_1(s) h(u)(s) ds,
\nonumber\\
&&
u_{+1}:=u_1-c^2\int_0^\infty K_0(s) h(u)(s) ds,
\label{Def-Asymptotic}\\
&&u_+:=K_0(t) u_{+0}+K_1(t) u_{+1},
\nonumber
\end{eqnarray}
where $K_0$ and $K_1$ are the operators defined by \eqref{Def-K}, below.
Then $u_{+0}\in H^\mu(\brn)$, $u_{+1}\in H^{\mu-1}(\brn)$, and 
$u$ has the asymptotic behaviors given by 
\[
\|u(t)-u_+(t)\|_{H^{\mu-1}(\brn)}\to 0,\ \ 
\|\partial_t \left(u(t)-u_+(t)\right)\|_{H^{\mu-1}(\brn)}\to 0.
\]

(5) 
The solution $u$ obtained in (1) is the global solution 
when $H=0$ and $\mu_0=0$.
Namely, $T=\infty$.
\end{theorem}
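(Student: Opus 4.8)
The first step is to recast \eqref{Cauchy} as a fixed-point problem. Since the data in (1) are real-valued, the map $\real$ disappears and the linear part of \eqref{Cauchy} reduces to $c^{-2}\partial_t^2 u-e^{-2Ht}\Delta u+Q u$ with the constant squared mass $Q$ of \eqref{Def-Q}, which is positive by the hypothesis $0\le H<2\sqrt{2}\,|m|c^{2}/n\hbar$; in the same way $h(u)$ of \eqref{Def-h} becomes the polynomial $\lambda e^{-nHt}u^{3}+3\lambda r_0 e^{-nHt/2}u^{2}$. A Fourier transform in $x$ turns each mode into the ODE $c^{-2}\hat u''+(e^{-2Ht}|\xi|^{2}+Q)\hat u=0$, which is of Bessel type after the substitution $z=e^{-Ht}$; this yields the operators $K_0,K_1$ of \eqref{Def-K}, and I would seek the solution of \eqref{Cauchy} as the fixed point of the map sending $u$ to the free evolution $K_0(t)u_0+K_1(t)u_1$ plus the Duhamel term built from the kernels of \eqref{Def-K} applied to $h(u)$. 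The one linear input I need is $\|K_0(t)u_0+K_1(t)u_1\|_{\dot{X}^{\mu}(T)}\lesssim\dot{D}^{\mu}$ together with its inhomogeneous counterpart; the $\sqrt{H}$-weighted $L^{2}_t$ summand of $\dot{X}^{\mu}$ is furnished by the de Sitter energy identity, in which differentiating $e^{-2Ht}$ produces the dissipative term $-H\|e^{-Ht}\nabla u\|_{\dot H^{\mu}}^{2}\le 0$ for $H\ge0$.

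\textbf{Local theory: parts (1) and (2).} With these linear estimates I would run a contraction mapping in $X^{\mu}(T)$ equipped with the metric $d(u,v)=\|u-v\|_{\dot{X}^{0}(T)}$. The hardest part is the nonlinear estimate, and within it the quadratic term $e^{-nHt/2}u^{2}$, the least regular piece of $h$: one bounds $\|h(u)\|_{L^{1}((0,T),\dot{H}^{\nu})}$ and the corresponding differences, for $\nu\in\{0,\mu_0,\mu\}$, by H\"older's inequality, the Sobolev embedding $\dot{H}^{\mu_0}\hookrightarrow L^{2n/(n-2\mu_0)}$ (legitimate precisely because $0\le\mu_0<n/2$), and the fractional Leibniz rule, and the lower bound $\mu_0\ge(n-3)/2$ in \eqref{Assume-Mu} is exactly what makes the quadratic term estimable at all three levels simultaneously. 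Smallness of $T$ then closes the contraction, giving (1); uniqueness and the continuous dependence (2) follow from the same difference estimate, since the difference of the nonlinearities is controlled by $d(u,v)$ times a polynomial in $\|u\|_{X^{\mu}(T)}$ and $\|v\|_{X^{\mu}(T)}$. I expect this quadratic term to be the recurring technical obstacle in the paper.

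\textbf{Global small solutions and scattering: parts (3) and (4).} For $H>0$ the plan is a continuity/bootstrap argument for $\mathcal{M}(T):=\|u\|_{X^{\mu}(T)}$. The linear estimates give $\mathcal{M}(T)\lesssim\dot{D}^{\mu}+\mathcal{N}(T)$, where $\mathcal{N}(T)$ collects the time-integrated nonlinear contributions: its cubic part is $\lesssim\bigl(\int_0^{\infty}e^{-nHt}\,dt\bigr)\mathcal{M}(T)^{3}\lesssim H^{-1}\mathcal{M}(T)^{3}$ and its quadratic part is $\lesssim\bigl(\int_0^{\infty}e^{-nHt/2}\,dt\bigr)\mathcal{M}(T)^{2}\lesssim H^{-1}\mathcal{M}(T)^{2}$, the integrals converging precisely because $H>0$ supplies the exponential weights. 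The admissibility of the Sobolev exponents entering these bounds is what forces the dimension/regularity restriction into the two sub-cases listed in (3). If $\dot{D}^{\mu_0}$ is small enough the continuity argument yields $\mathcal{M}(T)\le 2C\dot{D}^{\mu}$ for all $T$, hence $T=\infty$. Given this global bound, the same weighted estimates show that $\int_0^{\infty}K_0(s)h(u)(s)\,ds$ and $\int_0^{\infty}K_1(s)h(u)(s)\,ds$ converge in $H^{\mu}$ and $H^{\mu-1}$, so $u_{+0},u_{+1},u_+$ of \eqref{Def-Asymptotic} are well defined, and $\|u(t)-u_+(t)\|_{H^{\mu-1}}$ and $\|\partial_t(u-u_+)(t)\|_{H^{\mu-1}}$ tend to $0$ as the tails $\int_t^{\infty}$ of those integrals vanish; this is (4).

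\textbf{The flat case: part (5).} When $H=0$ all the exponential weights equal $1$, the three integrals above diverge, and the bootstrap fails for large data; instead I would use energy conservation. With $H=0$ equation \eqref{Cauchy} is the Euler--Lagrange equation of $E[u]=\int_{\brn}\bigl(\frac{c^{-2}}{2}(\partial_t u)^{2}+\frac12|\nabla u|^{2}+\frac{\lambda}{4}u^{2}(u+2r_0)^{2}\bigr)\,dx$, in which the potential density is a perfect square because, after the shift, $r_0$ minimizes the Mexican-hat potential globally (here one uses $Q=2(mc/\hbar)^{2}=2\lambda r_0^{2}$ and \eqref{Def-r0}). Thus $E[u]\ge0$ and $E[u](t)=E[u](0)$, which bounds $\|\partial_t u(t)\|_{L^{2}}$ and $\|\nabla u(t)\|_{L^{2}}$ uniformly in $t$. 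The potential term does not by itself control $\|u(t)\|_{L^{2}}$, being flat along the second vacuum $u\equiv-2r_0$, but the fundamental theorem of calculus gives $\|u(t)\|_{L^{2}}\le\|u_0\|_{L^{2}}+t\sup_{s\in[0,t]}\|\partial_t u(s)\|_{L^{2}}$, which is finite on every bounded interval. Hence the $\dot{X}^{0}$-norm cannot blow up in finite time, and since by the local theory the solution can be continued over successive time steps whose length depends only on the $\dot{X}^{\mu_0}$-norm of the data while the higher $\dot{X}^{\mu}$-norm stays finite on bounded intervals, the blow-up alternative forces $T=\infty$. The delicate point here is exactly that $E$ is not coercive in the full $L^{2}$ norm --- the Goldstone-type flat direction --- which is why the linear-in-$t$ growth bound for $\|u\|_{L^{2}}$ is needed.
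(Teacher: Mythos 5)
Your overall architecture coincides with the paper's: Duhamel formulation through the operators $K_0,K_1,K$ of \eqref{Def-K}, contraction in $X^\mu(T)$ with the metric $d$, product/interpolation estimates in Sobolev--Besov spaces under \eqref{Assume-Mu}, the energy inequality of Lemma \ref{Lem-4} supplying the linear and dissipative bounds, scattering in (4) via the estimates of Lemma \ref{Lem-10} once $h(u)\in L^1((0,\infty),H^\mu)$, and for (5) exactly the paper's device: the conserved energy whose potential part is the perfect square $\lambda\left(r_0\real u+\tfrac12|u|^2\right)^2$, uniform bounds on $\|\partial_t u\|_2$, $\|\nabla u\|_2$, linear-in-$t$ growth of $\|u\|_2$, and continuation because the local existence time depends only on the size of the data norm. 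One small misattribution: the lower bound $\mu_0\ge(n-3)/2$ is forced by the \emph{cubic} term (the paper's exponent $\theta=(n-2\mu_0)/3$ must satisfy $\theta\le1$); the quadratic term only needs $\mu_0\ge(n-4)/2$.

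The genuine gap is in case (i) of part (3). Your smallness mechanism is the convergence of $\int_0^\infty e^{-nHt}dt$ and $\int_0^\infty e^{-nHt/2}dt$, but in the space $X^\mu$ the top-order factor is only controlled through the weighted norm $\|e^{-Ht}\nabla u\|_{L^\infty\dot H^\nu}$, so when you bound $\|h(u)(t)\|_{\dot H^\nu}$ by the interpolation inequality a factor $e^{3\theta Ht}$ (resp. $e^{2\tilde\theta Ht}$, $\tilde\theta=(n-2\mu_0)/4$) is spent to restore the weight, and the surviving time-weights are $e^{-2\mu_0 Ht}$ for the cubic and $e^{-\mu_0 Ht}$ for the quadratic term, not $e^{-nHt}$ and $e^{-nHt/2}$. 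For $\mu_0>0$ (case (ii)) these are integrable and your bootstrap closes, in agreement with the paper. But for $\mu_0=0$ the leftover weight is identically $1$, the integrals diverge, and the step as you state it fails. The paper closes this case by invoking the dissipative component $\sqrt{H}\|e^{-Ht}\nabla u\|_{L^2((0,T),\dot H^\nu)}$ of the $X$-norm: interpolating $L^q_t$ between $L^\infty_t$ and $L^2_t$ and choosing $q=n$ for the cubic and $\tilde q=n/2$ for the quadratic term, the time-weight is exhausted ($\|A\|_{L^{q_\ast}}$ with $q_\ast=\tilde q_\ast=\infty$) at the price of constants $H^{-3\theta/q}$, $H^{-2\tilde\theta/\tilde q}$, and the admissibility requirement $\tilde q=n/2\ge2$ is precisely the origin of the hypothesis $n\ge4$ in (i) --- a point your sketch leaves at the level of ``admissibility of Sobolev exponents'' without the argument that actually produces it.
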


\vspace{10pt}

Next, we consider the case $H<0$. 
For $\mu\ge0$, $T>0$ and $Q\ge0$, 
define the function space $X^\mu$ given by 
\begin{eqnarray}
\|u\|_{\dot{X}^\mu(T)}
&:=&
c^{-1}\|e^{Ht}\partial_t u\|_{L^\infty((0,T),\dot{H}^\mu(\brn))}
+
\|\nabla u\|_{L^\infty((0,T),\dot{H}^\mu(\brn))}
\nonumber\\
&&
+
\sqrt{Q} \|e^{Ht} u\|_{L^\infty((0,T),\dot{H}^\mu(\brn))}
+
c^{-1}\sqrt{-H} \|e^{Ht}\partial_t u\|_{L^2((0,T),\dot{H}^\mu(\brn))}
\nonumber
\\
&&
+
\sqrt{-HQ} \|e^{Ht} u\|_{L^2((0,T),\dot{H}^\mu(\brn))},
\label{Thm-2-X}
\\
X^\mu(T)
&:=&
\{u;\ \|u\|_{\dot{X}^\nu}<\infty\ \ \mbox{for}\ \ \nu=0,\mu_0,\mu\},
\nonumber
\\
\dot{D}^\mu
&:=&
c^{-1}\|u_1\|_{\dot{H}^\mu(\brn)}
+
\|\nabla u_0\|_{\dot{H}^\mu(\brn)}
+
\sqrt{Q} \|u_0\|_{\dot{H}^\mu(\brn)}.
\nonumber
\end{eqnarray}
We define the metric in $X^\mu(T)$ by $d(u,v):=\|u-v\|_{\dot{X}^0(T)}$ for $u,v\in X^\mu(T)$.

\begin{theorem}
\label{Thm-2}
Let $n\ge1$, $-2\sqrt{2} |m|c^2/n\hbar<H<0$.
Let $Q$ be defined by \eqref{Def-Q}.
Let $\lambda>0$, and let $r_0$ be defined by \eqref{Def-r0}.
Let $\mu_0$ and $\mu$ satisfy \eqref{Assume-Mu}.

(1) 
For any real-valued functions $u_0\in H^{\mu+1}(\brn)$ and $u_1\in H^\mu(\brn)$, 
there exist $0<T<\infty$ and a unique solution 
$u\in C([0,T),H^{\mu+1}(\brn))\cap C^1([0,T),H^\mu(\brn))\cap X^\mu(T)$
of \eqref{Cauchy}. 
Here, $T>0$ can be taken as the number which satisfies 
\begin{eqnarray}
&&
C\lambda c(-H)^{-1}
\left\{
\left(
\frac{e^{-4(1+\mu_0)HT}-1}{4(1+\mu_0)}
\right)^{1/2}
Q^{(n-3-2\mu_0)/2} C_0\dot{D}^{\mu_0}
\right.
\nonumber
\\
&&
\ \ \ \ 
+
\left.
r_0
\left(
\frac{e^{-2(1+\mu_0)HT}-1}{2(1+\mu_0)}
\right)^{1/2}
Q^{(n-4-2\mu_0)/4} 
\right\}
\le \frac{1}{2}
\label{Thm-2-1000}
\end{eqnarray}
for some universal constant $C_0>0$ and $C>0$.

(2) 
The solution $u$ obtained in (1) is continuously dependent on the data.
Namely, 
$d(u,v)\to 0$ as $v_0\to u_0$ in $H^1(\brn)$ and 
$v_1\to u_1$ in $L^2(\brn)$ for the solution $v$ obtained in (1) 
for the data $v_0:=v(0,\cdot)$ and $v_1:=\partial_t v(0,\cdot)$.
\end{theorem}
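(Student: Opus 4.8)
The plan is to prove both parts of Theorem~\ref{Thm-2} by the same contraction scheme used for Theorem~\ref{Thm-1}(1),(2), but with the weighted norm \eqref{Thm-2-X} adapted to $H<0$, working in a ball of $X^{\mu_0}(T)$. Since $u_0,u_1$ are real-valued and both $h$ in \eqref{Def-h} and the term $2(mc/\hbar)^2\real u$ in \eqref{Cauchy} preserve reality, we look for a real solution, for which $\real u=u$ and \eqref{Cauchy} reads $c^{-2}\partial_t^2u-e^{-2Ht}\Delta u+Qu+h(u)=0$ with the linear operator $L_0:=c^{-2}\partial_t^2-e^{-2Ht}\Delta+Q$; here $Q>0$ by \eqref{Def-Q} precisely because $-2\sqrt2\,|m|c^2/n\hbar<H<0$, and this positivity is what makes the energy coercive.

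The first step is a weighted energy estimate for $L_0$. For $v$ solving $L_0v=F$ with data $(v_0,v_1)$, apply $(-\Delta)^{\nu/2}$, pair with $\partial_t(-\Delta)^{\nu/2}v$ in $L^2(\brn)$, and integrate by parts, to get $\tfrac{d}{dt}E^\nu=-He^{-2Ht}\|\nabla v\|_{\dot{H}^\nu}^2+\langle F,\partial_tv\rangle_{\dot{H}^\nu}$ for $E^\nu:=\tfrac12c^{-2}\|\partial_tv\|_{\dot{H}^\nu}^2+\tfrac12e^{-2Ht}\|\nabla v\|_{\dot{H}^\nu}^2+\tfrac{Q}{2}\|v\|_{\dot{H}^\nu}^2$. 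The key point is that the \emph{rescaled} energy $e^{2Ht}E^\nu$ satisfies $\tfrac{d}{dt}\big(e^{2Ht}E^\nu\big)=He^{2Ht}\big(c^{-2}\|\partial_tv\|_{\dot{H}^\nu}^2+Q\|v\|_{\dot{H}^\nu}^2\big)+e^{2Ht}\langle F,\partial_tv\rangle_{\dot{H}^\nu}$, whose first term is $\le0$ because $H<0$; integrating in $t$ and using Cauchy--Schwarz in $x$ and then in $t$ (against the weight $e^{Hs}$) on the forcing term yields
\[
\|v\|_{\dot{X}^\nu(T)}\le C\dot{D}^\nu+Cc(-H)^{-1/2}\big\|e^{Hs}\|F(s)\|_{\dot{H}^\nu}\big\|_{L^2((0,T))},
\]
where the $L^\infty_t$ parts of $\|v\|_{\dot{X}^\nu(T)}$ come from $\sup_t e^{2Ht}E^\nu(t)$ and the $L^2_t$ parts from the dissipation term $(-H)\int_0^t e^{2Hs}(\cdots)\,ds$; this is exactly why the weights $e^{Ht}$ appear throughout \eqref{Thm-2-X}.

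Next, one sets up $\Phi(w):=$ the solution of $L_0v=-h(w)$ with data $(u_0,u_1)$, applies the linear estimate at the three levels $\nu=0,\mu_0,\mu$, and is reduced to bounding $\big\|e^{Hs}\|h(w)(s)\|_{\dot{H}^\nu}\big\|_{L^2((0,T))}$. I would estimate the cubic part $\lambda|w|^2w\,e^{-nHs}$ and the quadratic part $\lambda r_0(2w\real w+|w|^2)e^{-nHs/2}$ of $h(w)$ by the fractional Leibniz/Sobolev product inequalities, interpolating each factor between $\dot{H}^{\mu_0}$ and $\dot{H}^{\mu_0+1}$ (and, at $\nu=0$, between $\dot{H}^0$ and $\dot{H}^1$); condition \eqref{Assume-Mu}, which says $0<n-2\mu_0\le3$, is precisely what allows the interpolation exponents to lie in $[0,1]$ with all intermediate Sobolev indices strictly subcritical. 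Each factor treated through its $\dot{H}^{\mu_0}$ part is converted by $\|w(s)\|_{\dot{H}^{\mu_0}}\le Q^{-1/2}e^{-Hs}\|w\|_{\dot{X}^{\mu_0}(T)}$, producing the powers $Q^{(n-3-2\mu_0)/2}$ (cubic) and $Q^{(n-4-2\mu_0)/4}$ (quadratic) in \eqref{Thm-2-1000}, whereas factors treated through $\|\nabla w\|_{\dot{H}^{\mu_0}}$ cost no power of $Q$. Collecting all exponential factors — $e^{-nHs}$ resp. $e^{-nHs/2}$ from $h$, the $e^{Hs}$ from the energy estimate, and the $e^{-Hs}$ powers from the $Q$-conversion — and computing the $L^2((0,T))$-norms in $s$ produces exactly $\left(\tfrac{e^{-4(1+\mu_0)HT}-1}{4(1+\mu_0)}\right)^{1/2}$ and $\left(\tfrac{e^{-2(1+\mu_0)HT}-1}{2(1+\mu_0)}\right)^{1/2}$, so that the short-time condition \eqref{Thm-2-1000} makes $\Phi$ map a ball $\{\|w\|_{\dot{X}^{\mu_0}(T)}\le C_0\dot{D}^{\mu_0}\}$ into itself and contract in the metric $d$; rerunning the same product estimates on $w_1-w_2$ gives the contraction and the continuous dependence asserted in (2). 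Finally, the $\dot{X}^\nu$-bounds for $\nu=0,\mu_0,\mu$ recombine into $u\in L^\infty((0,T),H^{\mu+1}(\brn))$ with $\partial_tu\in L^\infty((0,T),H^\mu(\brn))$, and the standard time-continuity argument together with the equation upgrades this to $u\in C([0,T),H^{\mu+1}(\brn))\cap C^1([0,T),H^\mu(\brn))$, with uniqueness in this class following from the contraction.

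The step I expect to be the main obstacle is the nonlinear product estimate at the critical level $\mu_0$ combined with the bookkeeping of the exponential weights: because $H<0$ the factors $e^{-nHt}$ and $e^{-nHt/2}$ in $h$ \emph{grow} in $t$, so the relevant time integrals converge only for finite $T$, and one must balance this growth against the decaying weight $e^{Hs}$ from the energy estimate and the further $e^{-Hs}$ factors from the $Q$-conversion so that the exponents come out exactly as in \eqref{Thm-2-1000}; choosing the interpolation weights so that all intermediate Sobolev indices stay strictly below $n/2$ when $n$ is large is the additional technical care needed. No global statement is to be expected — and none is claimed — because for $H<0$ the spatial contraction amplifies the nonlinear term.
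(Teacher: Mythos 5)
Your proposal is correct and follows essentially the same route as the paper: your rescaled-energy identity for $e^{2Ht}E^\nu$ with the dissipation term absorbed is exactly the content of Lemma \ref{Lem-5} (multiplying the equation by $e^{2Ht}\partial_t u$), and your contraction argument in the weighted ball, with the $e^{Ht}$-weighted factors converted through $Q^{-1/2}$ and the gradient factors taken unweighted, reproduces the paper's estimates \eqref{Proof-Thm-2-3000}--\eqref{Proof-Thm-2-4000} in the special case $q_0=\tilde q_0=\infty$, $q=\tilde q=2$, which is precisely the choice the paper makes to arrive at \eqref{Thm-2-1000}. The exponent bookkeeping ($e^{-2(1+\mu_0)Hs}$ and $e^{-(1+\mu_0)Hs}$, the powers $Q^{(n-3-2\mu_0)/2}$, $Q^{(n-4-2\mu_0)/4}$, and the overall $c(-H)^{-1}$) matches the paper's, so no gap remains beyond details already established in the proof of Theorem \ref{Thm-1}.
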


On Theorems \ref{Thm-1} and \ref{Thm-2}, 
we remark that the estimate of the lifespan of time-local solutions from below has been shown 
in 
\cite[(iii) in Theorem 0.1]{Yagdjian-2019-JMP} when $H>0$, 
and 
in 
\cite[Theorem 0.2]{Galstian-Yagdjian-2020-NoDEA} when $H<0$,  
for data with high regularity such as $\mu>n/2\ge1$.
Theorem \ref{Thm-1} shows the existence of global solutions for small rough data, 
and that the asymptotic behaviors are given by the free solutions defined by 
\eqref{Def-Asymptotic} when $H=0$ or $H>0$ is small, 
while Theorem \ref{Thm-2} gives more explicit estimate of the lifespan of time-local solutions from below for rough data.

We denote the inequality $A\le CB$ for some constant $C>0$ which is not essential for the argument by $A\lesssim B$.
This paper is organized as follows.
In Section \ref{Section-Pre}, 
we collect several results on the derivation of the first equation in \eqref{Cauchy} as the Euler-Lagrange equation from a Lagrangian in the de Sitter spacetime, 
the energy estimate of the equation, 
which are required to prove Proposition \ref{Thm-BlowUp}, Theorems \ref{Thm-1} and \ref{Thm-2} 
in Sections 
\ref{Section-Proof-Thm-BlowUp}, \ref{Section-Proof-Thm-1} and \ref{Section-Proof-Thm-2}, 
respectively.

%

\newsection{Preliminaries}
\label{Section-Pre}
In this section, we collect several results to prove the results in the previous section.
We introduce the following fundamental result with its proof 
since Lemme \ref{Lem-2} is based on it.

\begin{lemma}
\label{Lem-1}
Let $m\in \br$.
Let $\lambda\in \br$.
Let $H\in \br$, and put $(g_{\alpha\beta}):=\diag(-c^2,e^{2Ht},\cdots,e^{2Ht})$.
Let $L$ be the Lagrangian density defined by 
\[
L(\phi):=-\frac{1}{2} \overline{\partial^\alpha} \phi \partial_\alpha \phi
+\frac{1}{2} \left(\frac{mc}{\hbar}\right)^2 |\phi|^2-\frac{\lambda}{4}|\phi|^4.
\]
Let $g$ denote the determinant of the matrix $(g_{\alpha\beta})$.
Then the Euler-Lagrange equation of the action 
\[
\int_\br \int_{\brn} L(\phi) \sqrt{-g} dx dt
\] 
is given by \eqref{EL-Eq}.
\end{lemma}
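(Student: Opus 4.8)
The plan is to derive \eqref{EL-Eq} by the standard variational computation, treating $\phi$ and $\overline{\phi}$ as independent fields (equivalently, varying the real and imaginary parts of $\phi$ separately). First I would record the geometric data attached to $(g_{\alpha\beta})=\diag(-c^2,e^{2Ht},\dots,e^{2Ht})$: the determinant is $g=-c^2e^{2nHt}$, hence $\sqrt{-g}=ce^{nHt}$, which depends on $t$ but not on $\phi$ or its derivatives; the inverse metric is $(g^{\alpha\beta})=\diag(-c^{-2},e^{-2Ht},\dots,e^{-2Ht})$, so that $\partial^0\phi=-c^{-2}\partial_t\phi$ and $\partial^j\phi=e^{-2Ht}\partial_j\phi$. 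Since $(g^{\alpha\beta})$ is real, $\overline{\partial^\alpha\phi}\,\partial_\alpha\phi=g^{\alpha\beta}\partial_\alpha\phi\,\overline{\partial_\beta\phi}$, so the Lagrangian density reads $L(\phi)=-\tfrac12 g^{\alpha\beta}\partial_\alpha\phi\,\overline{\partial_\beta\phi}+\tfrac12(mc/\hbar)^2\phi\overline{\phi}-\tfrac\lambda4(\phi\overline{\phi})^2$.

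Next I would write down the Euler-Lagrange equation obtained by varying $\overline{\phi}$. Because $\sqrt{-g}$ is independent of $\phi$ and of its first derivatives, the equation for the action $\int\!\!\int L(\phi)\sqrt{-g}\,dx\,dt$ is
\[
\sqrt{-g}\,\frac{\partial L}{\partial\overline{\phi}}-\partial_\beta\!\left(\sqrt{-g}\,\frac{\partial L}{\partial(\partial_\beta\overline{\phi})}\right)=0 .
\]
From the explicit form of $L$ one reads off $\partial L/\partial\overline{\phi}=\tfrac12(mc/\hbar)^2\phi-\tfrac\lambda2|\phi|^2\phi$ and $\partial L/\partial(\partial_\beta\overline{\phi})=-\tfrac12 g^{\alpha\beta}\partial_\alpha\phi=-\tfrac12\partial^\beta\phi$. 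Substituting these and multiplying by $-2/\sqrt{-g}$ gives
\[
-\left(\frac{mc}{\hbar}\right)^2\phi+\lambda|\phi|^2\phi-\frac{1}{\sqrt{-g}}\partial_\beta\bigl(\sqrt{-g}\,\partial^\beta\phi\bigr)=0 ,
\]
in which $\tfrac{1}{\sqrt{-g}}\partial_\beta(\sqrt{-g}\,\partial^\beta\phi)=\square_g\phi$ is the Laplace–Beltrami operator applied to $\phi$.

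Finally I would expand $\square_g\phi$ with the data above: $\partial_\beta(\sqrt{-g}\,\partial^\beta\phi)=\partial_t\bigl(ce^{nHt}(-c^{-2})\partial_t\phi\bigr)+\sum_j\partial_j\bigl(ce^{nHt}e^{-2Ht}\partial_j\phi\bigr)=-c^{-1}e^{nHt}(\partial_t^2\phi+nH\partial_t\phi)+ce^{(n-2)Ht}\Delta\phi$, so that $\square_g\phi=-c^{-2}(\partial_t^2\phi+nH\partial_t\phi)+e^{-2Ht}\Delta\phi$. Inserting this into the displayed equation produces exactly \eqref{EL-Eq}. The computation is routine throughout; there is no genuine obstacle, and the only point demanding a little care is the bookkeeping of the conformal factors $e^{\pm 2Ht}$ and $e^{nHt}$ inside the divergence $\partial_\beta(\sqrt{-g}\,\partial^\beta\phi)$, together with the convention of varying $\overline{\phi}$ (equivalently, the real and imaginary parts of $\phi$) rather than $\phi$ itself, which is what reproduces the displayed complex equation rather than its conjugate.
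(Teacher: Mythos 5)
Your proposal is correct and follows essentially the same route as the paper: a standard variational derivation of the Euler--Lagrange equation with the weight $\sqrt{-g}=ce^{nHt}$, the only difference being that you organize the variation via the formal $(\phi,\overline{\phi})$-independence (Wirtinger) convention while the paper computes the first variation directly and pairs it with $\real(\overline{\delta\phi}\,\cdot\,)$, which is equivalent as you note. Your sign bookkeeping, the computation of $\square_g\phi=-c^{-2}(\partial_t^2\phi+nH\partial_t\phi)+e^{-2Ht}\Delta\phi$, and the resulting equation all match \eqref{EL-Eq}.
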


\begin{proof}
Since we have 
\[
\delta(\overline{\partial^\alpha \phi}\partial_\alpha \phi)
=
2\partial_\alpha(\real(\overline{\delta \phi}\partial^\alpha \phi))
-
2\real(\overline{\delta \phi}\partial_\alpha\partial^\alpha\phi),
\]
\[ 
\delta|\phi|^2=2\real(\overline{\delta\phi}\phi),
\ \ 
\delta|\phi|^4=4|\phi|^2\real(\overline{\delta\phi}\phi),
\]
we obtain 
\[
\delta L(\phi)
=
-\partial_\alpha(\real(\overline{\delta \phi}\partial^\alpha \phi))
+
\real(\overline{\delta\phi} E),
\]
where we have put 
\[
E:=\partial_\alpha\partial^\alpha \phi
+
\left(\frac{mc}{\hbar}\right)^2 \phi-\lambda |\phi|^2\phi.
\]
So that, we have 
\begin{eqnarray*}
\delta\int_\br \int_{\brn} L(\phi) \sqrt{-g} dx dt
&=&
-
\int_\br \int_{\brn} 
\partial_\alpha(\real(\overline{\delta \phi}\partial^\alpha \phi)\sqrt{-g}) dx dt
\\
&&
+
\int_\br \int_{\brn} 
\real(\overline{\delta\phi}F)
\sqrt{-g} dx dt,
\end{eqnarray*}
where $F$ is defined by 
\[
F:=E+\partial^\alpha\phi\frac{\partial_\alpha \sqrt{-g}}{\sqrt{-g}}=E-\frac{nH}{c^2}\partial_0\phi,
\]
which yields the required equation $F=0$ as the Euler-Lagrange equation.
\end{proof}

\begin{lemma}
\label{Lem-2}
Let $\lambda>0$.
Under the assumptions in Lemma \ref{Lem-1}, put 
\[
V(r):=-\frac{1}{2}\left(\frac{mc}{\hbar}\right)^2 r^2+\frac{\lambda}{4} r^4.
\]
Let $r_0$ be the number defined by \eqref{Def-r0}.
Then the Euler-Lagrange equation of the action 
$\int_\br \int_{\br^{n}} L(\phi+r_0) \sqrt{-g} dxdt$ is given by 
\beq
\label{Lem-2-1000}
\frac{1}{c^2}\partial_t^2\phi
-e^{-2Ht}\Delta\phi
+\frac{nH}{c^2} \partial_t \phi
+J=0,
\eeq
where  
\beq
\label{Lem-2-2000}
J:=\lambda
\left\{
|\phi|^2\phi+r_0(2\phi\real \phi+|\phi|^2)
+2r_0^2\real \phi 
\right\}.
\eeq
\end{lemma}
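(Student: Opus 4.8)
The plan is to deduce the statement directly from Lemma~\ref{Lem-1} by exploiting that $r_0$ is a \emph{constant}. Writing $\psi:=\phi+r_0$, we have $\delta\psi=\delta\phi$, $\partial^\alpha\psi=\partial^\alpha\phi$, $\partial_\alpha\partial^\alpha\psi=\partial_\alpha\partial^\alpha\phi$ and $\partial_0\psi=\partial_0\phi$, so the variational computation in the proof of Lemma~\ref{Lem-1}, applied with $\psi$ in place of $\phi$, goes through verbatim and yields $\delta L(\phi+r_0)=-\partial_\alpha\big(\real(\overline{\delta\phi}\,\partial^\alpha\phi)\big)+\real\big(\overline{\delta\phi}\,E_0\big)$ with
\[
E_0:=\partial_\alpha\partial^\alpha\phi+\Big(\frac{mc}{\hbar}\Big)^2(\phi+r_0)-\lambda|\phi+r_0|^2(\phi+r_0).
\]
Integrating the divergence term against $\sqrt{-g}$ contributes, exactly as in Lemma~\ref{Lem-1}, the term $\partial^\alpha\phi\,\partial_\alpha\sqrt{-g}/\sqrt{-g}=-(nH/c^2)\partial_0\phi$, so the Euler--Lagrange equation of $\int_\br\int_{\brn}L(\phi+r_0)\sqrt{-g}\,dxdt$ is $E_0-(nH/c^2)\partial_0\phi=0$. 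Using $g^{00}=-c^{-2}$ and $g^{jj}=e^{-2Ht}$ to write $\partial_\alpha\partial^\alpha\phi=-c^{-2}\partial_t^2\phi+e^{-2Ht}\Delta\phi$ and multiplying by $-1$, this is \eqref{Lem-2-1000} provided we show that
\[
J=-\Big(\frac{mc}{\hbar}\Big)^2(\phi+r_0)+\lambda|\phi+r_0|^2(\phi+r_0).
\]

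It then remains to verify that this $J$ equals \eqref{Lem-2-2000}, which is a routine algebraic identity. I would expand $|\phi+r_0|^2=|\phi|^2+2r_0\real\phi+r_0^2$ (using that $r_0$ is real), multiply through by $\phi+r_0$, and collect terms to obtain
\[
\lambda|\phi+r_0|^2(\phi+r_0)=\lambda\big\{|\phi|^2\phi+r_0(2\phi\real\phi+|\phi|^2)+2r_0^2\real\phi\big\}+\lambda r_0^2\phi+\lambda r_0^3 .
\]
By the definition \eqref{Def-r0} of $r_0$ we have $\lambda r_0^2=(mc/\hbar)^2$ --- equivalently $r_0$ is the nonzero critical point of $V$, i.e. $V'(r_0)=0$ --- so $\lambda r_0^2\phi$ cancels the linear term $(mc/\hbar)^2\phi$ and $\lambda r_0^3$ cancels the constant $(mc/\hbar)^2 r_0$. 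What is left is exactly $J=\lambda\{|\phi|^2\phi+r_0(2\phi\real\phi+|\phi|^2)+2r_0^2\real\phi\}$, which is \eqref{Lem-2-2000}.

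There is no genuine obstacle here; the only points requiring care are (a) that the shift is by a \emph{real} constant, so that $\real(\phi+r_0)=\real\phi+r_0$ while $\imaginary(\phi+r_0)=\imaginary\phi$, which is what keeps the expansions of $|\phi+r_0|^2$ and $|\phi+r_0|^4$ clean, and (b) attributing the vanishing of the constant and linear-in-$\phi$ terms explicitly to the choice \eqref{Def-r0} rather than taking it for granted --- this is the algebraic shadow of the spontaneous symmetry breaking discussed in the introduction. An equivalent route, avoiding any reference to $E_0$ inherited from Lemma~\ref{Lem-1}, is to observe directly that $\delta V(\phi+r_0)=\real\big(\overline{\delta\phi}\,[-(mc/\hbar)^2(\phi+r_0)+\lambda|\phi+r_0|^2(\phi+r_0)]\big)$ and combine it with the unchanged kinetic and $\sqrt{-g}$ contributions; both routes give the same $J$.
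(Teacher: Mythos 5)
Your argument is correct, but it is organized differently from the paper's own proof of Lemma \ref{Lem-2}. The paper does not pass through Lemma \ref{Lem-1}'s Euler--Lagrange expression evaluated at the shifted field; instead it first expands the shifted potential, $V(|\phi+r_0|)=\tfrac{\lambda}{4}\left(|\phi|^4+4r_0|\phi|^2\real\phi+4r_0^2(\real\phi)^2-r_0^4\right)$ (the mass term having been absorbed via $\lambda r_0^2=(mc/\hbar)^2$), and then recomputes the variation term by term, obtaining $\delta V(|\phi+r_0|)=\real(J\overline{\delta\phi})$ with $J$ exactly as in \eqref{Lem-2-2000}, so that the Euler--Lagrange equation $K:=\partial_\alpha(\partial^\alpha\phi\sqrt{-g})/\sqrt{-g}-J=0$ is \eqref{Lem-2-1000}. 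You instead observe that the shift is by a constant, so $\delta\phi=\delta\psi$ and all derivatives of $\psi:=\phi+r_0$ coincide with those of $\phi$; hence Lemma \ref{Lem-1} applies verbatim to $\psi$ and the whole content of Lemma \ref{Lem-2} reduces to the algebraic identity $\lambda|\phi+r_0|^2(\phi+r_0)-(mc/\hbar)^2(\phi+r_0)=\lambda\{|\phi|^2\phi+r_0(2\phi\real\phi+|\phi|^2)+2r_0^2\real\phi\}$, the cancellation of the linear and constant terms being exactly the choice \eqref{Def-r0}. This identity is precisely what the paper proves later as Lemma \ref{Lem-6}, so your route amounts to deriving Lemma \ref{Lem-2} as ``Lemma \ref{Lem-1} plus Lemma \ref{Lem-6}''; it is more economical (no new variational computation, and no need to expand $|\phi+r_0|^4$), whereas the paper's direct variation of the expanded potential gives an independent derivation which Lemma \ref{Lem-6} then serves to cross-check. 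Your cautionary points (the shift being real, so $\real(\phi+r_0)=\real\phi+r_0$, and the explicit use of $\lambda r_0^2=(mc/\hbar)^2$) are exactly the places where the computation could otherwise go wrong, and you handle them correctly; there is no gap.
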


\begin{proof}
We note that $r_0$ gives the minimum of $V$ by $V'(r)=\lambda r(r+r_0)(r-r_0)$.
We have 
\[
V(|\phi+r_0|)
=
\frac{\lambda}{4}
\left(
|\phi|^4+4r_0 |\phi|^2\real \phi+4r_0^2(\real \phi)^2-r_0^4
\right)
\]
and 
$\delta V(|\phi+r_0|)=\real(J\overline{\delta\phi})$ 
and 
$\delta L(\phi+r_0)=-\real(
\overline{\partial_\alpha \delta\phi}\partial^\alpha \phi
)
-
\real(J\overline{\delta\phi})$  
by direct calculations, 
where we have used 
\begin{eqnarray*}
&&
\delta|\phi|^2=2\real(\phi\overline{\delta\phi}),
\ \ 
\delta|\phi|^4=4\real(|\phi|^2 \phi\overline{\delta\phi}),
\ \ 
\delta(\real \phi)^2=2\real(\real \phi\overline{\delta\phi}),
\\
&&
\delta(|\phi|^2\real \phi)
=
\real\left(\left(2\phi\real \phi+|\phi|^2\right)\overline{\delta\phi}\right),
\ \ 
\delta(\overline{\partial^\alpha \phi}\partial_\alpha \phi)
=2\real(\overline{\partial_\alpha \delta\phi} \partial^\alpha\phi).
\end{eqnarray*}
Thus, we obtain 
\begin{multline*}
\delta \int_\br \int_{\brn} L(\phi+r_0) \sqrt{-g} dxdt
=
-
\real\int_\br\int_{\br^{n}} 
\partial_\alpha (\overline{\delta\phi}\partial^\alpha \phi \sqrt{-g}) dx dt
\\
+
\real
\int_\br\int_{\br^{n}} 
K\overline{\delta\phi}\sqrt{-g}dx dt,
\end{multline*}
where we have put 
$K:=\partial_\alpha(\partial^\alpha \phi \sqrt{-g})/\sqrt{-g}-J$.
So that, the Euler-Lagrange equation is given by $K=0$, 
which yields the required equation \eqref{Lem-2-1000}.
\end{proof}

\begin{lemma}
\label{Lem-3}
Under the assumptions in Lemma \ref{Lem-2}, 
put $u:=e^{nHt/2} \phi$.
Then the equation \eqref{Lem-2-1000} is rewritten as \eqref{Cauchy}, 
where $h(u)$ is defined by \eqref{Def-h}.
\end{lemma}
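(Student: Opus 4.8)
The plan is simply to carry out the change of unknown $\phi=e^{-nHt/2}u$ in \eqref{Lem-2-1000} and collect terms. First I would record the time derivatives: from $\phi=e^{-nHt/2}u$ one gets $\partial_t\phi=e^{-nHt/2}\left(\partial_t u-\frac{nH}{2}u\right)$ and $\partial_t^2\phi=e^{-nHt/2}\left(\partial_t^2 u-nH\,\partial_t u+\frac{n^2H^2}{4}u\right)$, while $\Delta\phi=e^{-nHt/2}\Delta u$ because the exponential factor depends only on $t$. Substituting these into the first two terms of \eqref{Lem-2-1000}, the first-order contributions cancel via $-nH\,\partial_t u+nH\,\partial_t u=0$, so that $c^{-2}\partial_t^2\phi+nHc^{-2}\partial_t\phi=c^{-2}e^{-nHt/2}\left(\partial_t^2 u-\frac{n^2H^2}{4}u\right)$.

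Next I would expand the nonlinear term $J$ of \eqref{Lem-2-2000} under the same substitution, noting that $r_0$ is a constant: each monomial in $\phi$ acquires an explicit exponential weight, namely $|\phi|^2\phi=e^{-3nHt/2}|u|^2u$, $r_0\left(2\phi\real\phi+|\phi|^2\right)=r_0 e^{-nHt}\left(2u\real u+|u|^2\right)$, and $2r_0^2\real\phi=2r_0^2 e^{-nHt/2}\real u$. Then I would multiply the whole of \eqref{Lem-2-1000} by $e^{nHt/2}$: the term quadratic in $H$ becomes $-\left(\frac{nH}{2c}\right)^2u$; the Laplacian term becomes $-e^{-2Ht}\Delta u$; the cubic and quadratic pieces of $J$ combine to exactly $h(u)$ as defined in \eqref{Def-h}; and the remaining linear piece $2\lambda r_0^2\real u$ becomes $2\left(\frac{mc}{\hbar}\right)^2\real u$ after inserting $r_0^2=m^2c^2/(\lambda\hbar^2)$ from \eqref{Def-r0}. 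Reading off the result gives precisely \eqref{Cauchy}.

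There is essentially no conceptual obstacle here; the only thing to be careful about is the bookkeeping of the three exponential weights and the sign of the first-order time-derivative term, which is exactly what produces the cancellation above and turns the friction term into the mass-like constant $-\left(\frac{nH}{2c}\right)^2$ appearing in \eqref{Cauchy}.
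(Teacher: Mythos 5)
Your computation is correct and is essentially the same as the paper's proof: the paper also substitutes $\phi=e^{-nHt/2}u$ (writing $\eta=-nH/2$), notes that $Je^{nHt/2}=h(u)+2\lambda r_0^2\real u$ with $\lambda r_0^2=(mc/\hbar)^2$, and observes the cancellation of the first-order time derivative turning the friction term into $-\left(\frac{nH}{2c}\right)^2u$. No gaps.
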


\begin{proof}
Let $K$ be defined by the left hand side in \eqref{Lem-2-1000}.
Put $\eta:=-nH/2$.
Since we have 
\[
Je^{-\eta t}=
\lambda|u|^2ue^{2\eta t}+\lambda r_0(2u\real u+|u|^2)e^{\eta t}
+2\lambda r_0^2 \real u,
\]
we obtain 
\[
-Ke^{-\eta t}=
\frac{1}{c^2}\partial_t^2 u-e^{-2Ht} \Delta u
-
\left(\frac{nH}{2c}\right)^2 u
+h(u)+2\left(\frac{mc}{\hbar}\right)^2 \real u,
\]
which shows $K=0$ gives the required equation \eqref{Cauchy}.
\end{proof}

\begin{lemma}
\label{Lem-4}
Let $H\ge 0$, $\lambda\in \br$ and $r_0\in \br$.
Consider the Cauchy problem 
\beq
\label{Lem-4-1000}
\begin{cases}
\frac{1}{c^2}\partial_t^2 u-e^{-2Ht} \Delta u
-
\left(\frac{nH}{2c}\right)^2 u
+2\lambda r_0^2 \real u+h
=0,
\\
u(0,\cdot)=u_0(\cdot),
\ \ 
\partial_t u(0,\cdot)=u_1(\cdot)
\end{cases}
\eeq
for any given function $h$ which decay rapidly at spatial infinity.
Then the following results hold.

(1) 
$\partial_\alpha e^\alpha+e^{n+1}+e^{n+2}=0$, 
where 
\begin{eqnarray*}
&& 
e^0:=\frac{1}{2c^2}|\partial_t u|^2-\frac{1}{2}
\left(\frac{nH}{2c}\right)^2|u|^2+\lambda r_0^2(\real u)^2+\frac{e^{-2Ht}}{2}|\nabla u|^2,
\\
&& 
e^j:=-e^{-2Ht}\real \left(\overline{\partial_t u}\partial_j u\right),
\\
&& 
e^{n+1}:=He^{-2Ht}|\nabla u|^2, \ \ e^{n+2}:=\real \left(\overline{\partial_t u}h\right).
\end{eqnarray*}

(2) 
If $u$ is real-valued and 
$Q:=2\lambda r_0^2-(nH/2c)^2\ge0$, 
then the following estimate hold; 
\begin{eqnarray*}
&&
\frac{1}{c}\|\partial_t u\|_{L^\infty((0,\infty),L^2(\brn))}
+
\sqrt{Q} \|u\|_{L^\infty((0,\infty),L^2(\brn))}
+
\|e^{-Ht}\nabla u\|_{L^\infty((0,\infty),L^2(\brn))}
\\
&&+
\sqrt{H}\|e^{-Ht}\nabla u\|_{L^2((0,\infty)\times\brn)}
\\
&\lesssim&
\frac{1}{c}\|u_1\|_{L^2(\brn)}
+
\sqrt{Q} \|u_0\|_{L^2(\brn)}
+
\|\nabla u_0\|_{L^2(\brn)}
+
c\|h\|_{L^1((0,\infty),L^2(\brn))}.
\end{eqnarray*}

(3) 
If $h=h(u)$ is given by \eqref{Def-h}, then 
$\partial_0 \tilde{e}^0+\partial_j e^j+\tilde{e}^{n+1}=0$ holds,
where 
\begin{eqnarray*}
&& 
\tilde{e}^0:=e^0+\frac{\lambda}{4}|u|^4e^{-nHt}+\lambda r_0|u|^2\real u\, e^{-nHt/2}
\\
&& 
\tilde{e}^{n+1}:=e^{n+1}+\frac{\lambda nH}{4}|u|^4e^{-nHt}
+
\frac{\lambda r_0nH}{2}|u|^2\real u\, e^{-nHt/2},
\end{eqnarray*}
and $e^\alpha$ is defined in (1) for $0\le \alpha\le n+1$.
\end{lemma}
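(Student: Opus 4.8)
The plan is to base everything on the single multiplier identity obtained by pairing the first equation in \eqref{Lem-4-1000} with $\overline{\partial_t u}$ and taking real parts, and then to extract (2) and (3) by spatial integration and by specializing $h$.

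\emph{Part (1).} I would multiply the equation by $\overline{\partial_t u}$, take the real part, and rewrite each of the five resulting terms as a spacetime divergence modulo the lower-order contributions $e^{n+1}$ and $e^{n+2}$. Concretely: $\real(\overline{\partial_t u}\,c^{-2}\partial_t^2 u)=\partial_0\big(\tfrac{1}{2c^2}|\partial_t u|^2\big)$; using $\real(\overline{\partial_t u}\,\partial_j^2 u)=\partial_j\real(\overline{\partial_t u}\,\partial_j u)-\tfrac12\partial_t|\partial_j u|^2$ together with $\partial_0 e^{-2Ht}=-2He^{-2Ht}$ gives $-e^{-2Ht}\real(\overline{\partial_t u}\Delta u)=\partial_j e^j+\partial_0\big(\tfrac{e^{-2Ht}}{2}|\nabla u|^2\big)+e^{n+1}$; the two zeroth-order terms become $\partial_0\big(-\tfrac12(\tfrac{nH}{2c})^2|u|^2\big)$ and $\partial_0\big(\lambda r_0^2(\real u)^2\big)$, since $\real(\overline{\partial_t u}\,u)=\tfrac12\partial_t|u|^2$ and $\real(\overline{\partial_t u})\,\real u=\tfrac12\partial_t(\real u)^2$; and the last term is exactly $e^{n+2}=\real(\overline{\partial_t u}\,h)$. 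Summing and using that the equation vanishes yields $\partial_\alpha e^\alpha+e^{n+1}+e^{n+2}=0$, with $e^0,e^j$ as in the statement.

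\emph{Part (2).} I would integrate the identity of (1) over $\brn$; the flux term $\partial_j e^j$ integrates to zero by the decay of $u$ at spatial infinity (rigorously, one proves the identity first for smooth data with suitable decay and passes to the limit). For real-valued $u$ the quantity $E(t):=\int_{\brn} e^0\,dx$ equals $\tfrac12\big(c^{-2}\|\partial_t u\|_{L^2(\brn)}^2+Q\|u\|_{L^2(\brn)}^2+\|e^{-Ht}\nabla u\|_{L^2(\brn)}^2\big)$ with $Q:=2\lambda r_0^2-(nH/2c)^2$, so $E(t)\ge0$ and $E(t)$ controls the first three norms on the left-hand side, while $E(0)\lesssim\big(c^{-1}\|u_1\|_{L^2}+\sqrt Q\|u_0\|_{L^2}+\|\nabla u_0\|_{L^2}\big)^2$. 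Since $H\ge0$ we have $\int_{\brn} e^{n+1}\,dx\ge0$, hence $\tfrac{d}{dt}E\le -\int_{\brn} e^{n+2}\,dx\le \|\partial_t u\|_{L^2}\|h\|_{L^2}\le c\sqrt{2E}\,\|h\|_{L^2}$; dividing by $\sqrt E$ gives $\tfrac{d}{dt}\sqrt E\le\tfrac{c}{\sqrt2}\|h\|_{L^2}$ and therefore $\sup_t\sqrt{E(t)}\le\sqrt{E(0)}+\tfrac{c}{\sqrt2}\|h\|_{L^1((0,\infty),L^2)}$, which is the claimed bound for the first three terms. For the spacetime term I would instead integrate $\tfrac{d}{dt}E+\int e^{n+1}+\int e^{n+2}=0$ over $t\in(0,\infty)$ to obtain $H\|e^{-Ht}\nabla u\|_{L^2((0,\infty)\times\brn)}^2=E(0)-E(\infty)-\int_0^\infty\!\!\int_{\brn} e^{n+2}\le E(0)+c\sqrt2\,\big(\sup_t\sqrt E\big)\|h\|_{L^1_tL^2}$, and then combine with the previous bound via Young's inequality.

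\emph{Part (3).} With $h=h(u)$ as in \eqref{Def-h}, I would compute $\partial_0$ of the two extra terms appearing in $\tilde e^0$. Using $\partial_t|u|^4=4\real(\overline{\partial_t u}\,|u|^2 u)$ and $\partial_t(|u|^2\real u)=\real\big(\overline{\partial_t u}(2u\real u+|u|^2)\big)$, one gets $\real(\overline{\partial_t u}\,h(u))=\partial_0\big(\tfrac\lambda4|u|^4 e^{-nHt}+\lambda r_0|u|^2\real u\,e^{-nHt/2}\big)+\tfrac{\lambda nH}{4}|u|^4 e^{-nHt}+\tfrac{\lambda r_0 nH}{2}|u|^2\real u\,e^{-nHt/2}$. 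Substituting this for $e^{n+2}$ in the identity of (1) and regrouping the exact time derivative into $\tilde e^0$ and the remaining lower-order terms into $\tilde e^{n+1}$ gives $\partial_0\tilde e^0+\partial_j e^j+\tilde e^{n+1}=0$. All three computations are elementary algebra with the product rule; the only point needing genuine care is in (2), both in justifying that $\int_{\brn}\partial_j e^j\,dx$ vanishes for the regularity class at hand (a density/approximation argument) and in running the differential inequality so as to produce the stated estimate, including the $L^2((0,\infty)\times\brn)$ contribution, without extraneous constants.
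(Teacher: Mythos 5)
Your proposal is correct and follows essentially the same route as the paper: multiply the equation by $\overline{\partial_t u}$, take real parts to get the divergence identity in (1), integrate over $\brn$ and use nonnegativity of $\int e^{n+1}\,dx$ for $H\ge0$ in (2), and absorb $h(u)$ into exact time derivatives via $\partial_t|u|^4$ and $\partial_t(|u|^2\real u)$ in (3). The only (cosmetic) difference is in (2), where you run a Gr\"onwall-type differential inequality for $\sqrt{E}$ and a second integration for the spacetime term, while the paper integrates the identity once over $[0,t]$ and bounds the source term by $\|\partial_t u\|_{L^\infty L^2}\|h\|_{L^1L^2}$ before absorbing it.
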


\begin{proof}
(1) 
Multiplying $\overline{\partial_t u}$ to the first equation in \eqref{Lem-4-1000} and taking its real part, we obtain the required equation by 
\begin{eqnarray*}
&&
\real\left(\overline{\partial_t u}\partial_t^2 u\right)
=\partial_t
\left(\frac{1}{2} |\partial_t u|^2\right),
\\
&&
e^{-2Ht} \real \left(\overline{\partial_t u}\Delta u\right)
=
\nabla
\left\{e^{-2Ht}\real\left(\overline{\partial_t u}\nabla u\right)\right\}
-\partial_t\left(\frac{e^{-2Ht}}{2} |\nabla u|^2\right)
-He^{-2Ht} |\nabla u|^2,
\\
&&
\real \left(\overline{\partial_t u} u\right)
=
\partial_t \left(\frac{|u|^2}{2}\right),
\ \ \real\left(\overline{\partial_t u}\real u\right)
=
\partial_t \left(\frac{|\real u|^2}{2}\right).
\end{eqnarray*}

(2) 
Integrating the both sides in $\partial_\alpha e^\alpha+e^{n+1}+e^{n+2}=0$ in (1), 
we have 
\[
\int_\brn e^0(t) dx
+
H\|e^{-Hs}\nabla u\|_{L^2((0,t),L^2)}^2
=
\int_\brn e^0(0) dx
-
\int_0^t \int_\brn e^{n+2} dxdt
\]
for $t>0$,
where we have used 
\[
\int_\brn e^0(t) dx=\frac{1}{2c^2}\|\partial_t u(t)\|_2^2+\frac{Q}{2} \|u(t)\|_2^2
+\frac{e^{-2Ht}}{2} \|\nabla u(t)\|_2^2
\]
since $u$ is real-valued.
By the H\"older inequality 
$\int_0^t\int_\brn |e^{n+2}| dx dt\le \|\partial_t u\|_{L^\infty L^2} \|h\|_{L^1L2}$, 
we obtain the required result.

(3) 
Put 
$e^0_\ast:=\tilde{e}^0-e^0$,
$e^{n+1}_\ast:=\tilde{e}^{n+1}-e^{n+1}$.
We have $\real(\overline{\partial_t u} h)=\partial_t e^0_\ast+e^{n+1}_\ast$  
by 
\begin{eqnarray*}
&&
\real\left(\overline{\partial_t u} |u|^2 u\right) e^{-nHt}
=
\partial_t
\left(\frac{1}{4} |u|^4 e^{-nHt}\right)+\frac{nH}{4} |u|^4 e^{-nHt},
\\
&&
\real
\left\{
\overline{\partial_t u} 
\left(2u\real u+|u|^2 \right) 
\right\} e^{-nHt/2}
=
\partial_t
\left( |u|^2 \real u\, e^{-nHt/2}\right)
+
\frac{nH}{2} |u|^2 \real u\, e^{-nHt/2}.
\end{eqnarray*}
So that, we obtain 
$
0=\partial_\alpha e^\alpha+e^{n+1}+e^{n+2}
=\partial_0 \tilde{e}^0+\partial_j e^j+\tilde{e}^{n+1} 
$ 
by (1), 
which is the required result.
\end{proof}

\begin{lemma}
\label{Lem-5}
Let $H<0$, $\lambda\in \br$ and $r_0\in \br$.
Consider the problem \eqref{Lem-4-1000}.
Then the following results hold.

(1) 
$\partial_\alpha e^\alpha+e^{n+1}+e^{n+2}=0$, 
where 
\begin{eqnarray*}
&& 
e^0:=
\frac{e^{2Ht} }{2c^2}|\partial_t u|^2
+
\frac{1}{2}|\nabla u|^2
-
\frac{e^{2Ht} }{2}
\left(\frac{nH}{2c}\right)^2|u|^2
+
\lambda r_0^2e^{2Ht} |\real u|^2,
\\
&& 
e^j:=-\real \left(\overline{\partial_t u}\partial_j u\right),
\\
&& 
e^{n+1}
:=
-
\frac{He^{2Ht}}{c^2}|\partial_t u|^2
+
\left(\frac{nH}{2c}\right)^2 H e^{2Ht} |u|^2
-
2\lambda r_0^2 H e^{2Ht}|\real u|^2, 
\\
&& 
e^{n+2}
:=
e^{2Ht} \real \left(\overline{\partial_t u} h\right).
\end{eqnarray*}

(2) 
Let $q$ be any number with $2\le q\le \infty$, 
and let $q'$ be the conjugate number with $1/q+1/q'=1$.
If $u$ is real-valued and 
$Q:=2\lambda r_0^2-(nH/2c)^2\ge0$, 
then the following estimate hold; 
\begin{eqnarray*}
&&
\frac{1}{c}\|e^{Ht}\partial_t u\|_{L^\infty((0,\infty),L^2(\brn))}
+
\sqrt{Q} \|e^{Ht} u\|_{L^\infty((0,\infty),L^2(\brn))}
+
\|\nabla u\|_{L^\infty((0,\infty),L^2(\brn))}
\\
&&
+
\frac{\sqrt{-H}}{c} 
\|e^{Ht}\partial_t u\|_{L^2((0,\infty)\times\brn)}
+
\sqrt{-HQ} 
\|e^{Ht} u\|_{L^2((0,\infty)\times\brn)}
\\
&\lesssim&
\frac{1}{c}\|u_1\|_{L^2(\brn)}
+
\sqrt{Q} \|u_0\|_{L^2(\brn)}
+
\|\nabla u_0\|_{L^2(\brn)}
+
\frac{c}{(-H)^{1/q}} 
\|e^{Ht} h\|_{L^{q'}((0,\infty),L^2(\brn))}.
\end{eqnarray*}

(3) 
If $h=h(u)$ is given by \eqref{Def-h}, then 
$\partial_0 \tilde{e}^0+\partial_j e^j+\tilde{e}^{n+1}=0$ holds,
where 
\begin{eqnarray*}
&& 
\tilde{e}^0:=e^0+\frac{\lambda}{4}|u|^4e^{-(n-2)Ht}+\lambda r_0|u|^2\real u\, e^{-(n-4)Ht/2}
\\
&& 
\tilde{e}^{n+1}:=e^{n+1}+\frac{\lambda (n-2)H}{4}|u|^4e^{-(n-2)Ht}
+
\frac{\lambda r_0(n-4)H}{2}|u|^2\real u\, e^{-(n-4)Ht/2},
\end{eqnarray*}
and $e^\alpha$ is defined in (1) for $0\le \alpha\le n+1$.
\end{lemma}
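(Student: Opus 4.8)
All three assertions are the $H<0$ counterparts of Lemma~\ref{Lem-4}, and the plan is to run the same energy method with the multiplier adapted to the contracting metric. For part~(1) the key change is to multiply the first equation of \eqref{Lem-4-1000} not by $\overline{\partial_t u}$ but by $e^{2Ht}\overline{\partial_t u}$, and then take the real part. The weight $e^{2Ht}$ cancels the factor $e^{-2Ht}$ in front of $\Delta u$, so that $-e^{-2Ht}e^{2Ht}\real(\overline{\partial_t u}\Delta u)=\partial_j e^j+\partial_0(\tfrac12|\nabla u|^2)$ with $e^j=-\real(\overline{\partial_t u}\partial_j u)$ and no remainder, while $c^{-2}e^{2Ht}\real(\overline{\partial_t u}\partial_t^2 u)=\partial_0\big(\tfrac{e^{2Ht}}{2c^2}|\partial_t u|^2\big)-\tfrac{He^{2Ht}}{c^2}|\partial_t u|^2$, the last term being the first summand of $e^{n+1}$. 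The terms with $u$ and $\real u$ are treated by $\real(\overline{\partial_t u}u)=\partial_t(|u|^2/2)$, $\real(\overline{\partial_t u}\real u)=\partial_t(|\real u|^2/2)$ and the Leibniz rule applied to $e^{2Ht}|u|^2$ and $e^{2Ht}|\real u|^2$; each of these produces a $\partial_0$ term (its contribution to $e^0$) together with an $He^{2Ht}(\cdots)$ term (its contribution to $e^{n+1}$), while $h$ gives $e^{n+2}$ verbatim. Summing the five contributions yields $\partial_\alpha e^\alpha+e^{n+1}+e^{n+2}=0$.

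For part~(2) I would integrate the identity of~(1) over $\brn$ (the $\partial_j e^j$ term drops by the spatial decay) and then over $(0,t)$, which, using that for real-valued $u$ one has $e^0=\tfrac{e^{2Ht}}{2c^2}|\partial_t u|^2+\tfrac12|\nabla u|^2+\tfrac{e^{2Ht}}{2}Q|u|^2$ and $\int_\brn e^{n+1}\,dx=(-H)e^{2Ht}\big(c^{-2}\|\partial_t u\|_2^2+Q\|u\|_2^2\big)$, gives
\[
\int_\brn e^0(t)\,dx+(-H)\int_0^t e^{2Hs}\Big(\tfrac1{c^2}\|\partial_s u(s)\|_2^2+Q\|u(s)\|_2^2\Big)\,ds=\int_\brn e^0(0)\,dx-\int_0^t\!\!\int_\brn e^{n+2}\,dx\,ds.
\]
Here $H<0$ and $Q\ge0$ make the left-hand side a sum of non-negative quantities — the second being the genuine dissipation caused by the spatial contraction — whose supremum over $t$ controls the square of the quantity $M$ on the left of the asserted estimate. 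It then remains to bound the right-hand side: $\int_\brn e^0(0)\,dx\lesssim(\dot D^0)^2$, and for the forcing term I would use H\"older in time,
\[
\int_0^t\!\!\int_\brn|e^{n+2}|\le\int_0^\infty\big(e^{Hs}\|\partial_s u(s)\|_2\big)\big(e^{Hs}\|h(s)\|_2\big)\,ds\le\|e^{Hs}\partial_s u\|_{L^q((0,\infty),L^2(\brn))}\,\|e^{Hs}h\|_{L^{q'}((0,\infty),L^2(\brn))},
\]
followed by the interpolation $\|e^{Hs}\partial_s u\|_{L^q_sL^2}\le\|e^{Hs}\partial_s u\|_{L^2_sL^2}^{2/q}\|e^{Hs}\partial_s u\|_{L^\infty_sL^2}^{1-2/q}$: since the $L^2$-in-time factor equals $c(-H)^{-1/2}$ times the dissipation norm in $M$ and the $L^\infty$-in-time factor equals $c$ times the energy norm in $M$, this product is $\le c(-H)^{-1/q}M$. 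Hence $M^2\lesssim(\dot D^0)^2+c(-H)^{-1/q}M\|e^{Hs}h\|_{L^{q'}L^2}$, and the elementary implication $M^2\lesssim A^2+BM\Rightarrow M\lesssim A+B$ gives the estimate in~(2).

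For part~(3), with $h=h(u)$ from \eqref{Def-h} I would substitute into $e^{n+2}=e^{2Ht}\real(\overline{\partial_t u}h(u))$, noting $e^{2Ht}e^{-nHt}=e^{-(n-2)Ht}$ and $e^{2Ht}e^{-nHt/2}=e^{-(n-4)Ht/2}$, and compute $\real(\overline{\partial_t u}|u|^2u)=\partial_t(|u|^4/4)$ and $\real\big(\overline{\partial_t u}(2u\real u+|u|^2)\big)=\partial_t(|u|^2\real u)$. Applying the Leibniz rule to $\tfrac{\lambda}{4}|u|^4e^{-(n-2)Ht}$ and to $\lambda r_0|u|^2\real u\,e^{-(n-4)Ht/2}$ identifies $e^{n+2}=\partial_0(\tilde e^0-e^0)+(\tilde e^{n+1}-e^{n+1})$, and feeding this into the identity of~(1) yields $\partial_0\tilde e^0+\partial_j e^j+\tilde e^{n+1}=0$.

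The only non-routine step is the closing argument in~(2): one has to spot that the forcing term should be paired, by H\"older in time, with $e^{Ht}h$ in $L^{q'}_t$ on one side and with $e^{Ht}\partial_t u$ interpolated between its $L^2_t$ dissipation bound (which is where the loss $(-H)^{-1/2}$ enters) and its $L^\infty_t$ energy bound on the other, so that exactly the weight $(-H)^{-1/q}$ is produced; everything else is the energy identity of Lemma~\ref{Lem-4} carried through with the extra weight $e^{2Ht}$.
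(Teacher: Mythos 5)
Your proposal is correct and follows essentially the same route as the paper: the weighted multiplier $e^{2Ht}\overline{\partial_t u}$ for the identity in (1), integration plus H\"older in time and the $L^2_t$--$L^\infty_t$ interpolation of $e^{Ht}\partial_t u$ (yielding the $(-H)^{-1/q}$ weight) for (2), and the Leibniz-rule rewriting of $e^{2Ht}\real(\overline{\partial_t u}\,h(u))$ as $\partial_t(\tilde e^0-e^0)+(\tilde e^{n+1}-e^{n+1})$ for (3). The only cosmetic difference is that you close (2) via the quadratic inequality $M^2\lesssim A^2+BM$, whereas the paper absorbs the forcing term by Young's inequality with a small parameter $\varepsilon$; the two are equivalent.
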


\begin{proof}
(1) 
Multiplying $e^{2Ht}\overline{\partial_t u}$ to the first equation in \eqref{Lem-4-1000} and taking its real part, we obtain the required equation by 
\begin{eqnarray*}
&&
e^{2Ht}\real\left(\overline{\partial_t u}\partial_t^2 u\right)
=\partial_t
\left(\frac{e^{2Ht} }{2} |\partial_t u|^2\right)
-He^{2Ht}|\partial_t u|^2,
\\
&&
\real \left(\overline{\partial_t u}\Delta u\right)
=
\nabla
\real\left(\overline{\partial_t u}\nabla u\right)
-
\partial_t\left(\frac{1}{2} |\nabla u|^2\right),
\\
&&
e^{2Ht}\real \left(\overline{\partial_t u} u\right)
=
\partial_t \left(\frac{e^{2Ht}}{2}|u|^2\right)
-
He^{2Ht} |u|^2,
\\ 
&&
e^{2Ht}\real\left(\overline{\partial_t u}\real u\right)
=
\partial_t \left(\frac{e^{2Ht} |\real u|^2}{2}\right)
-
He^{2Ht}|\real u|^2.
\end{eqnarray*}

(2) 
Integrating the both sides in $\partial_\alpha e^\alpha+e^{n+1}+e^{n+2}=0$ in (1), 
we have 
\[
\int_\brn e^0(t) dx
+
\int_0^t \int_\brn e^{n+1} dx ds
=
\int_\brn e^0(0) dx
-
\int_0^t \int_\brn e^{n+2} dxdt
\]
for $t>0$, 
where we note  
\[
\int_\brn e^0(t) dx=\frac{1}{2c^2}\|e^{Ht}\partial_t u(t)\|_2^2
+
\frac{1}{2}\|\nabla u(t)\|_2^2
+
\frac{Q}{2} \|e^{Ht} u(t)\|_2^2,
\]
and
\[
\int_0^t \int_\brn e^{n+1} dx ds
=
-\frac{H}{c^2}\|e^{Hs} \partial_t u\|_{L^2L^2}^2
-HQ\|e^{Hs} u\|_{L^2L^2}^2
\]
since $u$ is real-valued.
We estimate the last term by the H\"older inequality  
\begin{eqnarray*}
\int_0^t\int_\brn |e^{n+2}| dx dt
&\le& 
\|e^{Hs}\partial_t u\|_{L^q L^2} \|e^{Hs}h\|_{L^{q'} L^2}
\\
&\le& 
\frac{ \varepsilon^2(-H)^{2/q} }{4c^2}
\|e^{Hs}\partial_t u\|_{L^qL^2}^2
+
\frac{ c^2}{\varepsilon^2(-H)^{2/q} }
\|e^{Hs}h\|_{L^{q'}L^2}^2
\end{eqnarray*} 
for any number $\varepsilon>0$.
So that, the required inequality follows from the interpolation inequality 
$\|e^{Hs}\partial_t u\|_{L^qL^2}
\le
\|e^{Hs}\partial_t u\|_{L^\infty L^2}^{1-2/q}
\|e^{Hs}\partial_t u\|_{L^2 L^2}^{2/q}$ 
with $\varepsilon>0$ taken sufficiently small.

(3) 
Put 
$e^0_\ast:=\tilde{e}^0-e^0$,
$e^{n+1}_\ast:=\tilde{e}^{n+1}-e^{n+1}$.
We have 
$e^{2Ht}\real(\overline{\partial_t u} h)=\partial_t e^0_\ast+e^{n+1}_\ast$  
by 
\[
\real\left(\overline{\partial_t u} |u|^2 u\right) e^{-(n-2)Ht}
=
\partial_t
\left(\frac{1}{4} |u|^4 e^{-(n-2)Ht}\right)+\frac{(n-2)H}{4} |u|^4 e^{-(n-2)Ht},
\]
and 
\begin{multline*}
\real
\left\{
\overline{\partial_t u} 
\left(2u\real u+|u|^2 \right) 
\right\} e^{-(n-4)Ht/2}
=
\partial_t
\left( |u|^2 \real u\, e^{-(n-4)Ht/2}\right)
\\
+
\frac{(n-4)H}{2} |u|^2 \real u \, e^{-(n-4)Ht/2}.
\end{multline*}
So that, we obtain 
$
0=\partial_\alpha e^\alpha+e^{n+1}+e^{n+2}
=\partial_0 \tilde{e}^0+\partial_j e^j+\tilde{e}^{n+1} 
$ 
from (1) as required.
\end{proof}

We confirm that the Euler-Lagrange equation \eqref{Lem-2-1000} with \eqref{Lem-2-2000} is obtained from 
\eqref{EL-Eq} by the shift of the function $\phi$ as follows.

\begin{lemma}
\label{Lem-6}
For $\lambda>0$ and $r_0$ defined by \eqref{Def-r0},
the equation \eqref{Lem-2-1000} with \eqref{Lem-2-2000} 
is obtained from \eqref{EL-Eq} 
with $\phi$ replaced by $\phi+r_0$.
\end{lemma}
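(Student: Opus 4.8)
The plan is a direct substitution. Since $r_0\in\br$ is a constant, replacing $\phi$ by $\phi+r_0$ in \eqref{EL-Eq} leaves the three derivative terms $c^{-2}\partial_t^2\phi$, $nHc^{-2}\partial_t\phi$ and $-e^{-2Ht}\Delta\phi$ unchanged, so the only work is to expand the algebraic part $-(mc/\hbar)^2(\phi+r_0)+\lambda|\phi+r_0|^2(\phi+r_0)$ and identify it with the term $J$ in \eqref{Lem-2-2000}.

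First I would record the identity $(mc/\hbar)^2=\lambda r_0^2$, which is precisely the definition \eqref{Def-r0} of $r_0$; this is the point that makes the linear and constant contributions cancel. Next, using $\overline{\phi+r_0}=\overline{\phi}+r_0$ since $r_0$ is real, expand $|\phi+r_0|^2=|\phi|^2+2r_0\real\phi+r_0^2$, whence
\[
\lambda|\phi+r_0|^2(\phi+r_0)=\lambda|\phi|^2\phi+2\lambda r_0\phi\real\phi+\lambda r_0|\phi|^2+2\lambda r_0^2\real\phi+\lambda r_0^2\phi+\lambda r_0^3.
\]
Adding $-(mc/\hbar)^2(\phi+r_0)=-\lambda r_0^2\phi-\lambda r_0^3$ cancels the terms $\lambda r_0^2\phi$ and $\lambda r_0^3$, leaving exactly $\lambda\{|\phi|^2\phi+r_0(2\phi\real\phi+|\phi|^2)+2r_0^2\real\phi\}=J$. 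Substituting back yields \eqref{Lem-2-1000} with \eqref{Lem-2-2000}, as required.

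There is essentially no obstacle here; the only point requiring a little care is keeping track of the $\real\phi$ terms when expanding $|\phi+r_0|^2(\phi+r_0)$ for complex-valued $\phi$. One could alternatively argue conceptually: the Euler–Lagrange operator of a Lagrangian of the form in Lemma \ref{Lem-1} commutes with the constant translation $\phi\mapsto\phi+r_0$, so deriving the equation for $L(\psi)$ and then setting $\psi=\phi+r_0$ (Lemma \ref{Lem-1}) agrees with first shifting the Lagrangian and then taking the Euler–Lagrange equation (Lemma \ref{Lem-2}); but the one-line computation above is the most transparent justification and also makes explicit the role of \eqref{Def-r0}.
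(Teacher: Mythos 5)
Your proposal is correct and coincides with the paper's own proof: both proceed by direct substitution, expanding $\lambda|\phi+r_0|^2(\phi+r_0)$ and using $\lambda r_0^2=(mc/\hbar)^2$ from \eqref{Def-r0} to cancel the linear and constant terms, leaving exactly $J$. Your expansion and bookkeeping of the $\real\phi$ terms are accurate, so nothing is missing.
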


\begin{proof}
The result follows from a direct calculation by 
\[
\lambda|\phi+r_0|^2(\phi+r_0)
=
\lambda
\left\{|\phi|^2\phi+r_0(2\phi\real\phi+|\phi|^2)+2r_0^2\real\phi\right\}
+\lambda r_0^2(\phi+r_0)
\]
and $\lambda r_0^2=(mc/\hbar)^2$.
\end{proof}

To express the solution of the differential equation as the integral equation, 
we recall the following fundamental results for ordinary differential equations 
(see, e.g., \cite{Nakamura-2019-ODE}).
Put $D_t:=d/dt$.

\begin{lemma}
\label{Lem-7}
For any fixed nonnegative function 
$\widetilde{a}\in C([0,T))$ for $T>0$, 
let $\rho_0$ and $\rho_1$ be the solutions of the Cauchy problem 
\beq
\label{Lem-7-1000}
\left\{
\begin{array}{l}
\left(D_t^2+\widetilde{a}(t)\right) \rho_j(t)=0 \ \  \mbox{for} \ \ t\in [0,T), \\
\rho_j(0)=\delta_{0j}, \ \ D_t\rho_j(0)=\delta_{1j}
\end{array}
\right.
\eeq
for $j=0,1$, 
where 
$\delta_{00}=\delta_{11}=1$ and $\delta_{01}=\delta_{10}=0$.
Let $b\in L^1((0,T))$, and let $\rho$ be the solution of the equation  
\beq
\label{Lem-7-2000}
(D_t^2+\widetilde{a}(t))\rho(t)=b(t)
\eeq
for $t\in [0,T)$.
Put $\Phi=
\begin{pmatrix}
\rho_0 & \rho_1 \\
D_t \rho_0 & D_t\rho_1
\end{pmatrix}$.
Then the following results hold.

(1) 
$\det \Phi=1$.

(2) 
$\rho$ is given by 
\[
\begin{pmatrix}
\rho(t) \\
D_t\rho(t)
\end{pmatrix}
=\Phi(t)
\begin{pmatrix}
\rho(0) \\
D_t\rho(0)
\end{pmatrix}
+
\int_0^t
\Phi(t)\Phi(s)^{-1} 
\begin{pmatrix}
0 \\
b(s)
\end{pmatrix}
ds,
\]
which is rewritten as 
\begin{eqnarray}
\rho(t)&=&\rho_0(t)\rho(0)+\rho_1(t)D_t\rho(0)+\int_0^t \rho_{12}(t,s) b(s) ds,
\label{Lem-7-3000}
\\
D_t\rho(t)&=&D_t\rho_0(t)\rho(0)+D_t\rho_1(t)D_t\rho(0)+\int_0^t \rho_{22}(t,s) b(s) ds,
\label{Lem-7-4000}
\end{eqnarray}
where $\rho_{12}$ and $\rho_{22}$ are defined by 
\begin{eqnarray}
\rho_{12}(t,s)&:=&-\rho_0(t)\rho_1(s)+\rho_1(t)\rho_0(s),
\label{Lem-7-4100}
\\ 
\rho_{22}(t,s)&:=&-D_t\rho_0(t)\rho_1(s)+D_t\rho_1(t)\rho_0(s).
\label{Lem-7-4200}
\end{eqnarray}

(3) 
If $D_t \widetilde{a}\le 0$, then 
\beq
\label{Lem-7-4}
|\rho_0(\cdot)|\le 
\sqrt{ \frac{\widetilde{a}(0)}{\widetilde{a}(\cdot)} },
\ \ 
|D_t\rho_0(\cdot)|\le \sqrt{ \widetilde{a}(0) },
\ \ 
|\rho_1(\cdot)|\le 
\frac{1}{ \sqrt{\widetilde{a}(\cdot)} },
\ \ 
|D_t\rho_1(\cdot)|\le 1.
\eeq

(4) 
If $D_t \widetilde{a}\ge 0$, then 
\beq
\label{Lem-7-5}
|\rho_0(\cdot)|\le 1, 
\ \ 
|D_t\rho_0(\cdot)|\le \sqrt{ \widetilde{a}(\cdot) },
\ \ 
|\rho_1(\cdot)|\le 
\frac{1}{ \sqrt{\widetilde{a}(0)} },
\ \ 
|D_t\rho_1(\cdot)|\le 
\sqrt{ \frac{\widetilde{a}(\cdot)}{\widetilde{a}(0)} }.
\eeq

(5) 
$\rho\in C([0,T))$.
If $\widetilde{a}\in C([0,T))$ and $b\in C([0,T))$, then $\rho\in C^2([0,T))$.
\end{lemma}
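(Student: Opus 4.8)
The plan is to treat \eqref{Lem-7-1000}--\eqref{Lem-7-2000} as a linear second-order ODE with no first-order term and proceed in the classical order: Wronskian, variation of parameters, energy estimates, regularity.

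For (1), the equation $D_t^2\rho+\widetilde a\rho=0$ has no first-order term, so the Wronskian $\det\Phi=\rho_0 D_t\rho_1-\rho_1 D_t\rho_0$ satisfies $D_t\det\Phi=\rho_0 D_t^2\rho_1-\rho_1 D_t^2\rho_0=-\widetilde a(\rho_0\rho_1-\rho_1\rho_0)=0$; hence it is constant, and at $t=0$ it equals $\delta_{00}\delta_{11}-\delta_{01}\delta_{10}=1$. For (2), I would rewrite the scalar equation as the first-order system $D_t(\rho,D_t\rho)^{\mathrm T}=A(t)(\rho,D_t\rho)^{\mathrm T}+(0,b)^{\mathrm T}$ with $A(t)=\left(\begin{smallmatrix}0&1\\ -\widetilde a(t)&0\end{smallmatrix}\right)$. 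Then $\Phi$ is exactly the fundamental matrix normalized by $\Phi(0)=I$, and Duhamel's formula gives the stated expression. Using $\det\Phi=1$ from (1) to write $\Phi(s)^{-1}=\left(\begin{smallmatrix}D_t\rho_1(s)&-\rho_1(s)\\ -D_t\rho_0(s)&\rho_0(s)\end{smallmatrix}\right)$ and multiplying the matrices out produces \eqref{Lem-7-3000}--\eqref{Lem-7-4000} with $\rho_{12},\rho_{22}$ as in \eqref{Lem-7-4100}--\eqref{Lem-7-4200}.

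The heart of the lemma is the pointwise bounds (3) and (4), which I would obtain from monotone energy functionals chosen according to the sign of $D_t\widetilde a$. When $D_t\widetilde a\le0$, set $E_j(t):=(D_t\rho_j(t))^2+\widetilde a(t)\rho_j(t)^2$; using $D_t^2\rho_j=-\widetilde a\rho_j$ the cross terms cancel and $D_t E_j=(D_t\widetilde a)\rho_j^2\le0$, so $E_j(t)\le E_j(0)$. For $j=0$ one has $E_0(0)=\widetilde a(0)$, giving $|D_t\rho_0|\le\sqrt{\widetilde a(0)}$ and $\widetilde a(t)\rho_0(t)^2\le\widetilde a(0)$; for $j=1$, $E_1(0)=1$, giving $|D_t\rho_1|\le1$ and $\widetilde a(t)\rho_1(t)^2\le1$, which is \eqref{Lem-7-4}. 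When $D_t\widetilde a\ge0$ this functional is non-decreasing and useless, so instead I would use $\widetilde E_j(t):=(D_t\rho_j(t))^2/\widetilde a(t)+\rho_j(t)^2$; a direct computation gives $D_t\widetilde E_j=-(D_t\rho_j)^2 D_t\widetilde a/\widetilde a^2\le0$, so $\widetilde E_j(t)\le\widetilde E_j(0)$, and reading off $\widetilde E_0(0)=1$, $\widetilde E_1(0)=1/\widetilde a(0)$ yields \eqref{Lem-7-5}. The main obstacle — really the only delicate point — is that these computations divide by $\widetilde a$ and differentiate it, whereas the hypothesis only asks $\widetilde a\in C([0,T))$ nonnegative; I would read (3)--(4) under the implicit standing assumption $\widetilde a\in C^1$ with $\widetilde a>0$ (the case in all applications, where $\widetilde a$ is built from $Q>0$ and exponentials), or, for full generality, approximate $\widetilde a$ from below by smooth strictly positive non-increasing (resp.\ non-decreasing) functions and pass to the limit by continuous dependence of ODE solutions on the coefficient.

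Finally, for (5): existence and $C^2$-regularity of $\rho_0,\rho_1$ for continuous $\widetilde a$ is classical linear ODE theory. Continuity of $\rho$ then follows from \eqref{Lem-7-3000}: the first two terms are continuous, and $t\mapsto\int_0^t\rho_{12}(t,s)b(s)\,ds$ is continuous because $\rho_{12}$ is jointly continuous, hence bounded on compact $t$-intervals uniformly in $s$, and $b\in L^1((0,T))$, so dominated convergence applies; the same argument applied to \eqref{Lem-7-4000} gives $D_t\rho\in C([0,T))$. If in addition $b\in C([0,T))$, then from $D_t^2\rho=b-\widetilde a\rho$ the right-hand side is continuous, whence $\rho\in C^2([0,T))$.
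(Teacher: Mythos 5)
Your proposal is correct. Note that the paper itself gives no proof of this lemma: it simply cites the preprint \cite{Nakamura-2019-ODE}, so there is no in-paper argument to compare against. Your route --- constancy of the Wronskian, Duhamel's formula for the first-order system with the adjugate expression for $\Phi(s)^{-1}$, and the two monotone energy functionals $(D_t\rho_j)^2+\widetilde a\,\rho_j^2$ (for $D_t\widetilde a\le 0$) and $(D_t\rho_j)^2/\widetilde a+\rho_j^2$ (for $D_t\widetilde a\ge 0$) evaluated at the data $\rho_j(0)=\delta_{0j}$, $D_t\rho_j(0)=\delta_{1j}$ --- is the standard one and reproduces exactly the stated bounds \eqref{Lem-7-4}--\eqref{Lem-7-5}. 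You are also right to flag the only delicate point: parts (3)--(4) implicitly require $\widetilde a$ differentiable (already built into the hypotheses $D_t\widetilde a\lessgtr 0$) and, for (4), $\widetilde a>0$; your remark that in the application $\widetilde a(t,\xi)=c^2e^{-2Ht}|\xi|^2+c^2Q$ with the relevant sign of $H$, together with the approximation $\widetilde a+\varepsilon$ and continuous dependence for the degenerate case, closes this gap adequately, since the bounds involving $1/\sqrt{\widetilde a}$ are vacuous where $\widetilde a$ vanishes.
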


\vspace{10pt}

For $H\in \br$ and $Q\in \br$, 
let $\rho_0=\rho_0(t,\xi)$ and $\rho_1=\rho_1(t,\xi)$ be the functions 
obtained by Lemma \ref{Lem-7} 
putting 
\beq
\label{Def-aTilde}
\widetilde{a}=\widetilde{a}(t,\xi)
:=
\frac{c^2}{e^{2Ht}}\cdot 
\sum_{j=1}^n (\xi^j)^2+c^2Q
\eeq
for $\xi=(\xi^1, \cdots,\xi^n)$.
Put 
\beq
\label{Def-K}
\begin{array}{l}
K_0(t):=F^{-1}\rho_0(t,\cdot)F,
\ \  
K_1(t):=F^{-1}\rho_1(t,\cdot)F,
\\ 
K(t,s):=c^2\left\{-K_0(t)K_1(s)+K_1(t)K_0(s)\right\}
\end{array}
\eeq
for $t,s\in \br$,
where $F$ and $F^{-1}$ denote the Fourier transform and its inverse for $(x^1,\cdots,x^n)$.
Then the Cauchy problem 
\beq
\label{Lem-8-1000}
\frac{1}{c^2}\partial_t^2 u-e^{-2Ht} \Delta u +Qu+h=0, 
\ \ 
u(0)=u_0,
\ \ 
\partial_t u(0)=u_1
\eeq
for a given function $h$ on $\br^{1+n}$ can be regarded as the solution of the integral equation 
\beq
\label{Cauchy-Int}
u(t)=K_0(t)u_0+K_1(t)u_1-\int_0^t K(t,s) h(s) ds.
\eeq

By the estimates \eqref{Lem-7-4}, \eqref{Lem-7-5} and the Plancherel theorem for the Fourier transform, 
we obtain the following results 
(see e.g., \cite[Lemma 4.4]{Nakamura-2021-JDE}).

\begin{lemma}
\label{Lem-10}
Let $\mu\in \br$, 
and let $h\in L^1((0,\infty),H^\mu(\brn))$.
Let $K_0$ and $K_1$ be the operators defined by \eqref{Def-K}.
Put 
\begin{eqnarray*}
u_{+0}&:=&u_0+c^2\int_0^\infty K_1(s) h(s) ds,
\\
u_{+1}&:=&u_1-c^2\int_0^\infty K_0(s) h(s) ds,
\\
u_+(t)&:=& K_0(t)u_{+0}+K_1(t)u_{+1}
\end{eqnarray*}
for $t\ge0$.
Let $u$ be the solution of \eqref{Cauchy-Int}.
Then the following estimates hold.

\[
(1)
\ \  
\|u_{+0}\|_{H^\mu(\brn)}
\lesssim 
\|u_0\|_{H^\mu(\brn)}+c\int_0^\infty\|h(s)\|_{H^\mu(\brn)} ds
\]

\[
(2) 
\ \ 
\|u_{+1}\|_{H^{\mu-1}(\brn)}
\lesssim 
\|u_1\|_{H^{\mu-1}(\brn)}+c^2\int_0^\infty\|h(s)\|_{H^\mu(\brn)} ds
\]

\[
(3) 
\ \ 
\|u_{+}\|_{H^{\mu-1}(\brn)}
\lesssim 
\|u_0\|_{H^\mu(\brn)}
+\frac{1}{c}\|u_1\|_{H^{\mu-1}(\brn)}
+c\int_0^\infty\|h(s)\|_{H^\mu(\brn)} ds
\]

\[
(4) 
\ \ 
\|\partial_t u_{+}\|_{H^{\mu-1}(\brn)}
\lesssim 
c\|u_0\|_{H^\mu(\brn)}
+\|u_1\|_{H^{\mu-1}(\brn)}
+c^2\int_0^\infty\|h(s)\|_{H^\mu(\brn)} ds
\]

\[
(5) 
\ \ 
\|u(t)-u_{+}(t)\|_{H^{\mu-1}(\brn)}
\lesssim 
\frac{1}{c} \int_t^\infty\|h(s)\|_{H^\mu(\brn)} ds
\]

\[
(6) 
\ \ 
\|\partial_t\left(u(t)-u_{+}(t)\right)\|_{H^{\mu-1}(\brn)}
\lesssim 
c^2 \int_t^\infty\|h(s)\|_{H^\mu(\brn)} ds
\]
\end{lemma}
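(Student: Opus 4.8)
The statement is a Fourier-multiplier computation resting on Lemma~\ref{Lem-7}, and the plan is to pass to the spatial Fourier side and reduce everything to pointwise bounds on $\rho_0(t,\xi)$, $\rho_1(t,\xi)$ and their $t$-derivatives. After applying $F$, the operators $K_0(t)$ and $K_1(t)$ become multiplication by $\rho_0(t,\xi)$ and $\rho_1(t,\xi)$, which solve \eqref{Lem-7-1000} for $\widetilde{a}$ given by \eqref{Def-aTilde}, and by Plancherel every Sobolev norm is a weighted $L^2_\xi$ norm. Since $H\ge0$ one has $D_t\widetilde{a}(t,\xi)=-2Hc^2e^{-2Ht}|\xi|^2\le0$, so the bounds \eqref{Lem-7-4} hold uniformly in $t\ge0$, namely $|\rho_0(t,\xi)|\le(\widetilde{a}(0,\xi)/\widetilde{a}(t,\xi))^{1/2}$, $|D_t\rho_0(t,\xi)|\le\widetilde{a}(0,\xi)^{1/2}$, $|\rho_1(t,\xi)|\le\widetilde{a}(t,\xi)^{-1/2}$ and $|D_t\rho_1(t,\xi)|\le1$. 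With $\widetilde{a}(0,\xi)=c^2(|\xi|^2+Q)$ and $\widetilde{a}(t,\xi)\ge c^2Q$ for $t\ge0$, this says that as $\xi$-multipliers $\rho_0$ and $D_t\rho_0$ cost one power of $\langle\xi\rangle$ while $\rho_1$ and $D_t\rho_1$ cost none; the accompanying powers of $c$, $Q$ and $H$ are tracked as in \cite[Lemma 4.4]{Nakamura-2021-JDE}.

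Estimates (1)--(4) then follow by inspection. Taking $F$ of the definitions, $\widehat{u_{+0}}(\xi)=\widehat{u_0}(\xi)+c^2\int_0^\infty\rho_1(s,\xi)\widehat{h}(s,\xi)\,ds$, $\widehat{u_{+1}}(\xi)=\widehat{u_1}(\xi)-c^2\int_0^\infty\rho_0(s,\xi)\widehat{h}(s,\xi)\,ds$, $\widehat{u_+}(t,\xi)=\rho_0(t,\xi)\widehat{u_{+0}}(\xi)+\rho_1(t,\xi)\widehat{u_{+1}}(\xi)$, and $\partial_t\widehat{u_+}(t,\xi)=D_t\rho_0(t,\xi)\widehat{u_{+0}}(\xi)+D_t\rho_1(t,\xi)\widehat{u_{+1}}(\xi)$. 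For (1) and (2) one estimates the multiplier factor pointwise in $\xi$, moves the $L^2_\xi$ norm through the $s$-integral by Minkowski's inequality, and returns to Sobolev norms via Plancherel; for (3) and (4) one inserts in addition (1) for $\widehat{u_{+0}}$ in $H^\mu$ and (2) for $\widehat{u_{+1}}$ in $H^{\mu-1}$.

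For (5) and (6) the crux is the telescoping identity
\[
u(t)-u_+(t)=\int_t^\infty K(t,s)h(s)\,ds,
\]
obtained from \eqref{Cauchy-Int}, the definitions of $u_{+0}$, $u_{+1}$, and $K(t,s)=c^2\{-K_0(t)K_1(s)+K_1(t)K_0(s)\}$ in \eqref{Def-K}: the two half-line integrals buried in $u_{+0}$, $u_{+1}$ recombine into a single tail integral. Since $\rho_{12}(t,t,\xi)=0$, i.e.\ $K(t,t)=0$, differentiation under the integral sign gives $\partial_t\bigl(u(t)-u_+(t)\bigr)=\int_t^\infty\partial_tK(t,s)h(s)\,ds$, whose multiplier is $c^2\rho_{22}(t,s,\xi)$ with $\rho_{12},\rho_{22}$ as in \eqref{Lem-7-4100}, \eqref{Lem-7-4200}. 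Then (5) and (6) follow by bounding $c^2\rho_{12}(t,s,\xi)$ and $c^2\rho_{22}(t,s,\xi)$ for $0\le t\le s$ as operators $H^\mu\to H^{\mu-1}$, and integrating in $s\in[t,\infty)$ against $\|h(s)\|_{H^\mu}$ by Minkowski and Plancherel, exactly as in (1)--(4).

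The only non-mechanical step is the uniform bound on the Duhamel kernel $\rho_{12}$, $\rho_{22}$ for $0\le t\le s$. I would get it by noting that, for fixed $s$, the function $\tau\mapsto\rho_{12}(\tau,s,\xi)$ solves $(D_\tau^2+\widetilde{a}(\tau,\xi))y=0$ with Cauchy data $y(s)=0$, $D_\tau y(s)=\det\Phi(s,\xi)=1$ at $\tau=s$ (part (1) of Lemma~\ref{Lem-7}); running it from $\tau=s$ toward smaller $\tau$, along which $\widetilde{a}$ is non-decreasing, the bounds \eqref{Lem-7-5} with initial time $s$ give $|\rho_{12}(t,s,\xi)|\le\widetilde{a}(s,\xi)^{-1/2}$ and $|\rho_{22}(t,s,\xi)|\le(\widetilde{a}(0,\xi)/\widetilde{a}(s,\xi))^{1/2}$, the first uniformly bounded and the second of size $\langle\xi\rangle$; alternatively one simply multiplies out $\rho_{12}=-\rho_0(t)\rho_1(s)+\rho_1(t)\rho_0(s)$ and $\rho_{22}=-D_t\rho_0(t)\rho_1(s)+D_t\rho_1(t)\rho_0(s)$ and applies the four bounds \eqref{Lem-7-4}. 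Feeding these, with the factor $c^2$ carried by $K$, into the $H^\mu\to H^{\mu-1}$ mapping estimate closes (5) and (6); the rest is the routine bookkeeping of the powers of $c$, $Q$, $H$, as in the cited reference.
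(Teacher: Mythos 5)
Your proposal is correct and follows exactly the route the paper itself indicates for this lemma: the paper gives no detailed argument but simply appeals to the pointwise bounds \eqref{Lem-7-4}, \eqref{Lem-7-5} on $\rho_0,\rho_1$ and their derivatives together with the Plancherel theorem (citing \cite[Lemma 4.4]{Nakamura-2021-JDE}), which is precisely your multiplier computation, and your telescoping identity $u(t)-u_+(t)=\int_t^\infty K(t,s)h(s)\,ds$ with the Wronskian characterization of $\rho_{12},\rho_{22}$ is the standard device behind (5)--(6). The only caveat is the final bookkeeping of the powers of $c$ and $Q$ (e.g.\ your bound for (5) yields a factor $c^2\widetilde{a}(s,\xi)^{-1/2}\le c\,Q^{-1/2}$ rather than the stated $1/c$), but this is exactly the level of constant-tracking the paper declares inessential and delegates to the cited reference.
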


%

\newsection{Proof of Proposition \ref{Thm-BlowUp} }
\label{Section-Proof-Thm-BlowUp}
Put 
\[
Q:=\left(\frac{m_\ast c}{\hbar}\right)^2-\left(\frac{nH}{2c}\right)^2
\ \ \mbox{and}\ \ 
M:=\sqrt{-Q}
\]
which satisfies $M\ge0$ by $Q\le 0$.
Put $w(t):=\int_{\brn} u(t,x)dx$ for $t\ge0$. 
It suffices to show that $w$ blows up in finite time.
Integrating the first equation in \eqref{Cauchy-BlowUp}, we have 
\beq
\label{Thm-1-Proof-1000}
\frac{1}{c^{2}} \partial_t^2w +Qw-h=0,
\eeq
where $h:=e^{-n(p-1)Ht/2}\int_\brn |u|^pdx$ and 
we have used the divergence theorem.
So that, $w$ is written as 
\beq
\label{Thm-1-Proof-1500}
w(t)=(\cosh cM t) w_0
+
\frac{\sinh cM t}{cM} w_1
+
c^2\int_0^t \frac{\sinh cM (t-s)}{cM} h(s)ds, 
\eeq
and $w$ satisfies $w(t)\ge0$ and $\partial_t w(t)\ge0$ for $t\ge0$ 
by $w_0\ge0$, $w_1\ge0$, $h\ge0$.
By the finite speed of the propagation, we may assume 
that the support of $u(t,\cdot)$ is in the ball of the radius $r(t):=r_0+c(1-e^{-Ht})/H$ 
for some number $r_0>0$ with $\supp u_0\cup \supp u_1\subset\{x\in \brn;\, |x|\le r_0\}$.
By this support condition and the H\"older inequality, we have 
\[
w(t)^p\le \{\omega_n r(t)^n\}^{p-1} \int_\brn |u(t,x)|^p dx,
\]
which yields 
\beq
\label{Thm-1-Proof-2000}
h(t)\ge b(t) |w(t)|^p,
\eeq 
where $ \omega_n$ denotes the volume of the unit ball in $\brn$, 
and we have put 
\[
b(t):=e^{-n(p-1)Ht/2}  \{\omega_n r(t)^n\}^{-p+1}.
\]
From this, $w$ satisfies the differential inequality
\beq
\label{Thm-1-Proof-2500}
\partial_t^2w(t)+c^2Qw(t)-c^2b(t)w^p\ge0,
\eeq
which yields 
\[
\partial_t^2 w(t)+c^2Qw(t)\ge0 
\]
by $b(t) w(t)^p\ge0$.
Multiplying $\partial_t w$ to this inequality, integrating it, and using the assumptions $w_0\ge0$, $w_1\ge0$ and  $w_1\ge cM w_0$, we have 
$(\partial_t w)^2+c^2Qw^2\ge0$.
This inequality yields $\partial_t w-cMw\ge0$ by $w\ge0$ and $\partial_tw\ge0$, 
by which we obtain 
\beq
\label{Thm-1-Proof-3000} 
w(t) \ge w_0 e^{cMt}.
\eeq
We have $r(t)\le r_0+c/H$ for $t\ge0$ when $H>0$, 
and we also have $r(t)\le 2ce^{-Ht}/|H|$ for sufficiently large $t$ when $H<0$ 
by the definition of $r(t)$.
So that, $b(t)$ is bounded as 
\beq
\label{Thm-1-Proof-4000} 
b(t)\ge B e^{-n(p-1)|H|t/2}
\eeq
for $t\gg 1$, 
where $B$ is a constant defined by 
\[
B
:=
\omega_n^{-p+1} 
\begin{cases}
\left(r_0+\frac{c}{H}\right)^{-n(p-1)} & \mbox{if} \ H>0\ \mbox{and}\ t\ge0, 
\\
\left(\frac{2c}{|H|}\right)^{-n(p-1)} & \mbox{if} \ H<0\ \mbox{and}\ t\gg1.
\end{cases}
\]
In addition, 
we have 
\beq
\label{Thm-1-Proof-5000} 
b'(t)\le 0
\eeq
for $t\gg1$ 
since 
\[
b'(t)=-n(p-1) \omega_n^{-p+1} \left(e^{Ht/2} r(t)\right)^{-n(p-1)} 
\left(\frac{H}{2}+\frac{r'(t)}{r(t)}\right)
\]
and 
\[
\frac{r'(t)}{r(t)}=\frac{ce^{-Ht}}{r_0+\frac{c}{H}(1-e^{-Ht})}
\to 
\begin{cases}
0 & \mbox{if} \ H>0,\\
-H & \mbox{if} \ H<0
\end{cases}
\]
as $t\to \infty$.

Multiplying $\partial_tw$ to \eqref{Thm-1-Proof-2500}, 
which is non-negative,
we have 
\[
\partial_t e_0(t)+e_1(t)\ge0
\]
for $t\ge0$,
where we have put 
\[
e_0:=\frac{1}{2c^2} (\partial_t w)^2+\frac{Q}{2} w^2-\frac{b}{p+1} w^{p+1},
\ \ \ \ 
e_1:=\frac{\partial_t b}{p+1} w^{p+1}.
\]
Integrating the both sides of this inequality on the interval $[t_0, t]$ for sufficiently large $t_0\gg 1$, 
and using $b'\le 0$ in \eqref{Thm-1-Proof-5000}, 
we obtain 
\beq
\label{Thm-1-Proof-6000} 
\left(\partial_t w(t)\right)^2+c^2Qw(t)^2-\frac{2c^2b(t)}{p+1}w(t)^{p+1}\ge 2c^2 e_0(t_0).
\eeq
The term $bw^{p+1}$ in this inequality is estimated by 
\begin{eqnarray}
b(t)w(t)^{p+1}
&\ge& 
Bw_0^{p-1} e^{(p-1)(cM-n|H|/2)t} w(t)^2
\nonumber\\
&\ge& 
Bw_0^{p-1} w(t)^2
\label{Thm-1-Proof-7000} \\
&\ge&
Bw_0^{p+1} e^{2cMt}
\to \infty 
\nonumber 
\end{eqnarray}
as $t\to \infty$ by \eqref{Thm-1-Proof-3000}, \eqref{Thm-1-Proof-4000}, 
and $cM-n|H|/2\ge0$ due to $m_\ast\in i\br$.
Thus, the inequality \eqref{Thm-1-Proof-6000} yields 
\begin{eqnarray}
(\partial_t w(t))^2
&\ge&
-c^2Qw(t)^2+\frac{c^2 b(t)}{p+1} w(t)^{p+1}
\label{Thm-1-Proof-9000}
\\
&\ge&
c^2 M_1^2 w(t)^2
\nonumber
\end{eqnarray}
for $t\gg 1$ by \eqref{Thm-1-Proof-7000},
where we have put 
\[
M_1:=\left(M^2+\frac{Bw_0^{p-1}}{p+1}\right)^{1/2}.
\]
So that, we have $\partial_t w\ge cM_1 w$, 
by which we obtain 
\beq
\label{Thm-1-Proof-10000}
w(t)\ge w(t_1) e^{cM_1(t-t_1)}
\eeq
for  $t\ge t_1\gg1$.
For any sufficiently small number $\varepsilon>0$,
we estimate the term $b(t) w(t)^{p+1}$ in \eqref{Thm-1-Proof-9000} as  
\begin{eqnarray*}
b(t) w(t)^{p+1}
&=&
b(t)w(t)^{(1-\varepsilon)(p-1)} w(t)^{2+\varepsilon(p-1)}
\\
&\ge&
B\left(
\left(w(t_1)e^{-cM_1t_1}\right)^{1-\varepsilon} 
e^{-n|H|t/2+cM_1(1-\varepsilon)t}
\right)^{p-1} 
w(t)^{2+\varepsilon(p-1)}
\\
&\ge&
B
\left(w(t_1)e^{-cM_1t_1}\right)^{(1-\varepsilon)(p-1)} 
w(t)^{2+\varepsilon(p-1)}
\end{eqnarray*}
for $t\ge t_1\gg 1$ by \eqref{Thm-1-Proof-4000} and \eqref{Thm-1-Proof-10000}, 
where we have used 
$-n|H|t/2+cM_1(1-\varepsilon)t\ge0$ for sufficiently small $\varepsilon>0$ 
by $cM_1\ge cM> n|H|/2$ when $m_\ast\neq0$, 
and 
by 
$cM_1> cM= n|H|/2$ when $m_\ast=0$ and $w_0>0$.
By this estimate, \eqref{Thm-1-Proof-9000} and $Q\le0$, 
we obtain the differential inequality
\beq
\label{Thm-1-Proof-11000}
\partial_t w(t)
\ge 
c\sqrt{ \frac{B}{p+1} } 
\left(w(t_1)e^{-cM_1t_1}\right)^{(1-\varepsilon)(p-1)/2}
w(t)^{1+\varepsilon(p-1)/2}
\eeq
for $0<\varepsilon\ll 1$ and $t\ge t_1\gg 1$. 
Since $w$ is positive,  
and the positive solution of \eqref{Thm-1-Proof-11000} must blow up in finite time, 
the function $w$ blows up as required.

\newsection{Proof of Theorem \ref{Thm-1}}
\label{Section-Proof-Thm-1}
(1) 
Let $q=\infty$ when $H=0$, and $0\le 1/q\le \min\{1/2,1/(n-2\mu_0)\}$ when $H>0$.
Let $\tilde{q}=\infty$ when $H=0$, and $0\le 1/\tilde{q}\le \min\{1/2,2/(n-2\mu_0)\}$ when $H>0$.
Put $1/q_\ast:=1-(n-2\mu_0)/q$ and $1/\tilde{q}_\ast:=1-(n-2\mu_0)/\tilde{q}$.

Let $\rho_0$, $\rho_1$, $K_0$, $K_1$ and $K$ be the functions and the operators 
defined in Lemma \ref{Lem-7} and \eqref{Def-K} 
for the function  $\tilde{a}$ in \eqref{Def-aTilde}.
Then the solution of \eqref{Cauchy} is regarded as the fixed point of the operator $\Phi$ 
defined by 
\beq
\label{Def-Phi}
\Phi(u)(t)
:=
K_0(t) u_0+K_1(t) u_1-\int_0^t K(t,s) h(u)(s) ds,
\eeq
where $h(u)$ is defined by \eqref{Def-h}.
For constants $T>0$, $R_\nu>0$ for $\nu=0,\mu_0,\mu$, 
we define the closed ball defined by 
\beq
\label{Def-XTR}
X^\mu(T,R_0,R_{\mu_0}, R_\mu)
:=
\{u;\ \|u\|_{\dot{X}^\nu}\le R_\nu\ \ \mbox{for}\ \ \nu=0,\mu_0,\mu\},
\eeq
and we show $\Phi$ is a contraction mapping on this space 
for the suitable constants.
The solution is obtained as the fixed point of $\Phi$.
Let $\mu_0$ and $\mu$ satisfy \eqref{Assume-Mu}.
Let $q=\infty$ when $H=0$, $0\le 1/q\le \min\{1/2,1/(n-2\mu_0)\}$ when $H>0$.
We define $\theta$, $r_\ast$, $r_{\ast\ast}$, $q_\ast$ by 
\beq
\label{Proof-Thm-1-500}
\theta:=\frac{n-2\mu_0}{3},
\ \ 
\frac{1}{r_\ast}:=\frac{1}{6}-\frac{\mu_0}{3n},
\ \ 
\frac{1}{r_{\ast\ast}}:=\frac{1}{6}+\frac{2\mu_0}{3n},
\ \ 
\frac{1}{q_\ast}:=1-\frac{n-2\mu_0}{q}.
\eeq
We note 
\beq
\label{Proof-Thm-1-1000}
0\le \theta\le 1,
\ \ 
\frac{1}{2}=\frac{2}{r_\ast}+\frac{1}{r_{\ast\ast}},
\ \ 
0<\frac{1}{r_\ast}\le\frac{1}{2},
\ \ 
0<\frac{1}{r_{\ast\ast}}\le\frac{1}{2}
\eeq
hold by the definition of $\theta$, $(n-3)/2\le \mu_0<n/2$.
We have 
\beq
\label{Proof-Thm-1-1500}
\||u|^2u\|_{\dot{H}^\nu}
\lesssim
\|u\|_{L^{r_\ast}\cap \dot{B}^0_{r_\ast,2}}^2\|u\|_{\dot{B}^\nu_{r_{\ast\ast},2}}
\eeq
for $\nu\ge0$ 
by the nonlinear estimate in the Besov spaces 
(see \cite[Lemm 3.1]{Nakamura-Ozawa-2002-ASNSP}) 
and \eqref{Proof-Thm-1-1000}.
Since we have the embeddings  
$\dot{H}^{\mu_0+\theta}
\hookrightarrow 
\dot{B}^{\mu_0}_{r_{\ast\ast},2}
\hookrightarrow 
L^{r_\ast}\cap \dot{B}^0_{r_\ast,2}$ 
and 
$\dot{H}^{\nu+\theta}
\hookrightarrow 
\dot{B}^{\nu}_{r_{\ast\ast},2}$ 
by 
$1/r_\ast=1/r_{\ast\ast}-\mu_0/n$ 
and 
$1/r_{\ast\ast}=1/2-\theta/n$,
we have 
\begin{eqnarray}
\||u|^2u\|_{\dot{H}^\nu}
&\lesssim&
\|u\|_{\dot{H}^{\mu_0+\theta}}^2
\|u\|_{\dot{H}^{\nu+\theta} }
\nonumber\\
&\lesssim&
\|u\|_{\dot{H}^{\mu_0}}^{2(1-\theta)}
\|u\|_{\dot{H}^{\mu_0+1}}^{2\theta}
\|u\|_{\dot{H}^\nu}^{1-\theta}
\|u\|_{\dot{H}^{\nu+1}}^{\theta},
\label{Proof-Thm-1-1700}
\end{eqnarray}
where we have used the interpolation inequalities at the last line.
Thus, we obtain 
\begin{eqnarray}
\||u|^2ue^{-nHt}\|_{L^1\dot{H}^\nu}
&\lesssim& 
\|A\|_{q_\ast} 
\|u\|_{L^\infty \dot{H}^{\mu_0}}^{2(1-\theta)}
\|e^{-Ht}u\|_{L^q\dot{H}^{\mu_0+1}}^{2\theta}
\|u\|_{L^\infty \dot{H}^{\nu}}^{1-\theta}
\|e^{-Ht}u\|_{L^q\dot{H}^{\nu+1}}^{\theta}
\nonumber
\\
&\lesssim& 
\|A\|_{q_\ast} B\|u\|_{\dot{X}^{\mu_0}}^2\|u\|_{\dot{X}^\nu}
\label{Proof-Thm-1-2000}
\end{eqnarray}
for $\nu=0,\mu_0,\mu$ 
by the H\"older inequality and $1=1/q_\ast+3\theta/q$,
where we have put 
\beq
\label{Def-A-B}
A=A(t):=e^{-nHt+3\theta Ht} 
\ \ 
\mbox{and}
\ \  
B:=Q^{-3(1-\theta)/2} H^{-3\theta/q}.
\eeq
We note 
\beq
\label{Proof-Thm-1-3000}
\|A\|_{L^{q_\ast}((0,T))}
=
\begin{cases}
T^{1/q_\ast} 
& \mbox{if}\ \ 1\le q_\ast\le \infty, \ \ H\mu_0=0,
\\
1
& \mbox{if}\ \ q_\ast=\infty, \ \ H\mu_0>0,
\\
\left\{
\frac{1-e^{-2\mu_0HTq_\ast}}{2\mu_0Hq_\ast}
\right\}^{1/q_\ast}
& \mbox{if}\ \ q_\ast<\infty, \ \ H\mu_0>0
\end{cases}
\eeq
by a direct calculation.

Similarly to the above estimate, 
let $\tilde{q}=\infty$ when $H=0$, $0\le 1/\tilde{q}\le \min\{1/2,2/(n-2\mu_0)\}$ when $H>0$.
We define $\tilde{\theta}$, $\tilde{r}_\ast$, $\tilde{r}_{\ast\ast}$, $\tilde{q}_\ast$ by 
\beq
\label{Proof-Thm-1-3500}
\tilde{\theta}:=\frac{n-2\mu_0}{4},
\ \ 
\frac{1}{\tilde{r}_\ast}:=\frac{1}{4}-\frac{\mu_0}{2n},
\ \ 
\frac{1}{\tilde{r}_{\ast\ast}}:=\frac{1}{4}+\frac{\mu_0}{2n},
\ \ 
\frac{1}{\tilde{q}_\ast}:=1-\frac{n-2\mu_0}{2\tilde{q}}.
\eeq
We note 
\beq
\label{Proof-Thm-1-4000}
0\le \tilde{\theta}\le 1,
\ \ 
\frac{1}{2}=\frac{1}{\tilde{r}_\ast}+\frac{1}{\tilde{r}_{\ast\ast}},
\ \ 
0<\frac{1}{\tilde{r}_\ast}\le\frac{1}{2},
\ \ 
0<\frac{1}{\tilde{r}_\ast}\le\frac{1}{2}
\eeq
hold by the definition of $\tilde{\theta}$, $(n-4)/2\le \mu_0<n/2$.
We have 
\beq
\label{Proof-Thm-1-4500}
\|2u\real{u}+|u|^2\|_{\dot{H}^\nu}
\lesssim
\|u\|_{L^{\tilde{r}_\ast}\cap \dot{B}^0_{\tilde{r}_\ast,2}}\|u\|_{\dot{B}^\nu_{\tilde{r}_{\ast\ast},2}}
\eeq
by the nonlinear estimate 
and \eqref{Proof-Thm-1-4000}.
Since we have the embeddings  
$\dot{H}^{\mu_0+\tilde{\theta}}
\hookrightarrow 
\dot{B}^{\mu_0}_{\tilde{r}_{\ast\ast},2}
\hookrightarrow 
L^{\tilde{r}_\ast}\cap \dot{B}^0_{\tilde{r}_\ast,2}$ 
and 
$\dot{H}^{\nu+\tilde{\theta}}
\hookrightarrow 
\dot{B}^{\nu}_{\tilde{r}_{\ast\ast},2}$ 
by 
$1/\tilde{r}_\ast=1/\tilde{r}_{\ast\ast}-\mu_0/n$ 
and 
$1/\tilde{r}_{\ast\ast}=1/2-\tilde{\theta}/n$,
we have 
\begin{eqnarray}
\|2u\real{u} +|u|^2\|_{\dot{H}^\nu}
&\lesssim&
\|u\|_{\dot{H}^{\mu_0+\tilde{\theta}}}
\|u\|_{\dot{H}^{\nu+\tilde{\theta}} }
\nonumber
\\
&\lesssim&
\|u\|_{\dot{H}^{\mu_0}}^{1-\tilde{\theta}}
\|u\|_{\dot{H}^{\mu_0+1}}^{\tilde{\theta}}
\|u\|_{\dot{H}^\nu}^{1-\tilde{\theta}}
\|u\|_{\dot{H}^{\nu+1}}^{\tilde{\theta}},
\label{Proof-Thm-1-4700}
\end{eqnarray}
where we have used the interpolation inequalities at the last line.
Thus, we obtain 
\begin{eqnarray}
&&
\|(2u\real u+|u|^2)e^{-nHt/2}\|_{L^1\dot{H}^\nu}
\nonumber
\\
&\lesssim& 
\|\tilde{A}\|_{\tilde{q}_\ast} 
\|u\|_{ L^\infty \dot{H}^{\mu_0} }^{ 1-\tilde{\theta} }
\|e^{-Ht}u\|_{ L^{\tilde{q}} \dot{H}^{\mu_0+1} }^{ \tilde{\theta} }
\|u\|_{L^\infty \dot{H}^{\nu}}^{1-\tilde{\theta}}
\|e^{-Ht}u\|_{L^{\tilde{q}}\dot{H}^{\nu+1}}^{\tilde{\theta}}
\nonumber
\\
&\lesssim& 
\|\tilde{A}\|_{\tilde{q}_\ast} \tilde{B}\|u\|_{\dot{X}^{\mu_0}}\|u\|_{\dot{X}^\nu}
\label{Proof-Thm-1-5000}
\end{eqnarray}
for $\mu=0,\mu_0,\mu$ 
by the H\"older inequality and $1=1/\tilde{q}_\ast+2\tilde{\theta}/\tilde{q}$,
where we have put 
$\tilde{A}=\tilde{A}(t):=e^{-nHt/2+2\tilde{\theta} Ht}$ 
and 
$\tilde{B}=Q^{-(1-\tilde{\theta})} H^{-2\tilde{\theta}/\tilde{q}}$.
We note 
\beq
\label{Proof-Thm-1-6000}
\|\tilde{A}\|_{L^{\tilde{q}_\ast}((0,T))}
=
\begin{cases}
T^{1/\tilde{q}_\ast} 
& \mbox{if}\ \ 1\le \tilde{q}_\ast\le \infty, \ \ H\mu_0=0,
\\
1
& \mbox{if}\ \ \tilde{q}_\ast=\infty, \ \ H\mu_0>0,
\\
\left\{
\frac{1-e^{-\mu_0HT\tilde{q}_\ast}}{\mu_0H\tilde{q}_\ast}
\right\}^{1/\tilde{q}_\ast}
& \mbox{if}\ \ \tilde{q}_\ast<\infty, \ \ H\mu_0>0
\end{cases}
\eeq
by a direct calculation.

By \eqref{Proof-Thm-1-2000} and \eqref{Proof-Thm-1-5000}, 
we have 
\beq
\label{Proof-Thm-1-7000}
\|h(u)\|_{L^1\dot{H}^\nu}
\lesssim
\lambda\|A\|_{q_\ast}BR_{\mu_0}^2R_\nu
+
\lambda r_0\|\tilde{A}\|_{\tilde{q}_\ast}\tilde{B}R_{\mu_0}R_\nu
\eeq
for $\nu=0,\mu_0,\mu$, and any $u\in X^\mu(T,R_0,R_{\mu_0},R_\mu)$.
Since we have 
\[
\|\Phi(u)\|_{\dot{X}^\nu}
\lesssim 
\frac{1}{c}\|u_1\|_{\dot{H}^\nu}
+
\|\nabla u_0\|_{\dot{H}^\nu}
+
\sqrt{Q}\|u_0\|_{\dot{H}^\nu}
+
c\|h(u)\|_{L^1\dot{H}^\nu}
\]
by Lemma \ref{Lem-4}, 
we have 
\beq
\label{Proof-Thm-1-8000}
\|\Phi(u)\|_{\dot{X}^\nu}
\le 
C_0\dot{D}^\nu
+
Cc\lambda 
\left(
\|A\|_{q_\ast}BR_{\mu_0}
+
r_0\|\tilde{A}\|_{\tilde{q}_\ast}\tilde{B}
\right)
R_{\mu_0}R_\nu
\le R_\nu
\eeq
for $\nu=0,\mu_0,\mu$ 
for some constants $C_0>0$, $C>0$, and any $u\in X^\mu(T,R_0,R_{\mu_0},R_\mu)$ 
by \eqref{Proof-Thm-1-7000} 
if $R_0$, $R_{\mu_0}$ and $R_\mu$ satisfy 
\beq
\label{Proof-Thm-1-9000}
R_\nu\ge 2C_0 \dot{D}^\nu,
\ \ 
Cc\lambda 
\left(
\|A\|_{q_\ast}BR_{\mu_0}
+
r_0\|\tilde{A}\|_{\tilde{q}_\ast}\tilde{B}
\right)
R_{\mu_0}
\le \frac{1}{2}
\eeq
for $\nu=0,\mu_0,\mu$.

Next, we consider the estimate for the metric.
Since we have 
\[
\||u|^2u-|v|^2v\|_2
\lesssim
\max_{w=u,v}\|w\|_{r_\ast}^2\|u-v\|_{r_{\ast\ast}}
\]
for any functions $u$ and $v$ similarly to \eqref{Proof-Thm-1-1500} 
by the H\"older inequality,
we obtain 
\[
\|(|u|^2u-|v|^2v)e^{-nHt}\|_{L^1 L^2}
\lesssim 
\|A\|_{q_\ast} B\max_{w=u,v}\|w\|_{\dot{X}^{\mu_0}}^2\|u-v\|_{\dot{X}^0}
\]
similarly to \eqref{Proof-Thm-1-2000} by the same argument. 
Since we also have 
\[
\|2u\real{u}+|u|^2-(2v\real{v}+|v|^2)\|_2
\lesssim
\max_{w=u,v}
\|w\|_{ L^{\tilde{r}_\ast} } \|u-v\|_{ L^{ \tilde{r}_{\ast\ast} } }
\]
similarly to \eqref{Proof-Thm-1-4500}, 
we obtain 
\[
\|\{2u\real{u}+|u|^2-(2v\real{v}+|v|^2)\}e^{-nHt/2}\|_{L^1 L^2}
\lesssim 
\|\tilde{A}\|_{\tilde{q}_\ast} \tilde{B}\max_{w=u,v}\|w\|_{\dot{X}^{\mu_0}}\|u-v\|_{\dot{X}^0}
\]
similarly to \eqref{Proof-Thm-1-5000}.
So that, we have 
\begin{eqnarray*}
&& 
\|h(u)-h(v)\|_{L^1 L^2}
\\
&\lesssim&
\lambda\|A\|_{q_\ast}B
\max_{w=u,v}\|w\|_{\dot{X}^{\mu_0}}^2
\|u-v\|_{\dot{X}^0}
+
\lambda r_0\|\tilde{A}\|_{\tilde{q}_\ast}\tilde{B}
\max_{w=u,v}\|w\|_{\dot{X}^{\mu_0}}
\|u-v\|_{\dot{X}^0}
\\
&\lesssim&
\lambda\|A\|_{q_\ast}BR_{\mu_0}^2\|u-v\|_{\dot{X}^0}
+
\lambda r_0\|\tilde{A}\|_{\tilde{q}_\ast}\tilde{B}R_{\mu_0}\|u-v\|_{\dot{X}^0}
\end{eqnarray*}
for any $u,v\in X^\mu(T,R_0,R_{\mu_0},R_\mu)$ 
similarly to \eqref{Proof-Thm-1-7000},
by which we obtain 
\beq
\label{Proof-Thm-1-10000}
d(\Phi(u),\Phi(v))
\le 
Cc\lambda 
\left(
\|A\|_{q_\ast}BR_{\mu_0}
+
r_0\|\tilde{A}\|_{\tilde{q}_\ast}\tilde{B}
\right)
R_{\mu_0}d(u,v)
\le \frac{1}{2} \, d(u,v)
\eeq
by Lemma \ref{Lem-4} 
similarly to \eqref{Proof-Thm-1-8000} 
under the second condition in \eqref{Proof-Thm-1-9000}. 

We take 
$q=\tilde{q}=\infty$, thus, $q_\ast=\tilde{q}_\ast=1$, 
and 
$R_{\mu_0}=2C_0 \dot{D}^{\mu_0}$. 
Then the second condition in \eqref{Proof-Thm-1-9000} is satisfied 
for sufficiently small $T>0$, 
and $T$ depends on the size of $\dot{D}^{\mu_0}$.
So that, $\Phi$ is a contraction mapping, and we obtain the local in time solutions.

The continuity of the solution $u\in C([0,T),H^{\mu+1})\cap C^1([0,T),H^\mu)$ 
follows from the continuity of the operators $K_0$, $K_1$ and $K$ 
such as 
$K_0(\cdot)u_0\in C(\br,H^{\mu+1})$,
$\partial_t K_0(\cdot)u_0\in C(\br,H^{\mu})$ 
for $u_0\in H^{\mu+1}$,
$K_1(\cdot)u_1\in C(\br,H^{\mu+1})$,
$\partial_t K_1(\cdot)u_1\in C(\br,H^{\mu})$ 
for $u_1\in H^{\mu}$,
$\int_0^tK(t,s)h(s)ds\in C(\br_t,H^{\mu+1})$,
$\partial_t\int_0^tK(t,s)h(s)ds\in C(\br_t,H^{\mu})$ 
for $h\in L^1H^\mu$.
The uniqueness of the solution in 
$C([0,T),H^{\mu+1})\cap C^1([0,T),H^\mu)\cap X^\mu(T)$ 
follows from the continuity of the solution, 
and the result that the existence time $T$ is 
taken on the size of the norm of the data in our argument.
See e.g., \cite{Nakamura-2021-JDE} for the details.

(2) 
Let $v$ be the solution of the Cauchy problem for the data $v_0$ and $v_1$.
Put 
$\dot{D}^0(u-v)
:=
\frac{1}{c}\|u_1-v_1\|_2
+
\|\nabla (u_0-v_0)\|_2
+
\sqrt{Q}\|u_0-v_0\|_2$.
By Lemma \ref{Lem-4} and the similar argument to derive \eqref{Proof-Thm-1-8000}, 
we have 
\[
d(u,v)
\lesssim
\dot{D}^0(u-v)
+
c\|h(u)-h(v)\|_{L^1L^2}
\]
and thus, 
\[
d(u,v)
\le
C_0\dot{D}^0(u-v)
+
Cc\lambda 
\left(
\|A\|_{q_\ast}BR_{\mu_0}
+
r_0\|\tilde{A}\|_{\tilde{q}_\ast}\tilde{B}
\right)
R_{\mu_0}d(u,v).
\]
Since $u$ is the solution in $X^\mu(T,R_0,R_{\mu_0},R_\mu)$ 
under the condition \eqref{Proof-Thm-1-9000},
$v$ is in $X^\mu(T,R_0+\varepsilon,R_{\mu_0}+\varepsilon,R_\mu+\varepsilon)$ 
for sufficiently small $\varepsilon>0$ 
when $(v_0,v_1)$ is sufficiently close to $(u_0,u_1)$.
So that, 
$Cc\lambda 
\left(
\|A\|_{q_\ast}BR_{\mu_0}
+
r_0\|\tilde{A}\|_{\tilde{q}_\ast}\tilde{B}
\right)
R_{\mu_0}<1$ 
when $(v_0,v_1)$ is sufficiently close to $(u_0,u_1)$, 
which yields 
$d(u,v)\to 0$ as $(v_0,v_1)\to(u_0,u_1)$.

(3)  
We take $q_\ast=\tilde{q}_\ast=\infty$, 
thus, 
$q=n$, $\tilde{q}=n/2$ for the condition (i).
We take 
$q_\ast<\infty$, $\tilde{q}_\ast<\infty$ when 
for the condition (ii).
Then the second condition in \eqref{Proof-Thm-1-9000} is satisfied by 
\beq
\label{Proof-Thm-1-T-2000}
Cc\lambda(BR_{\mu_0}+r_0\tilde{B})R_{\mu_0}\le \frac{1}{2}
\eeq
for (i), 
where we need at least $n\ge4$ to make $\tilde{q}_\ast=\infty$ by $\tilde{q}=n/2\ge2$,
or 
\beq
\label{Proof-Thm-1-T-3000}
Cc\lambda
\left(
(2\mu_0Hq_\ast)^{-1/q_\ast}BR_{\mu_0}
+r_0(\mu_0H\tilde{q}_\ast)^{-1/\tilde{q}_\ast}\tilde{B}
\right)
R_{\mu_0}\le \frac{1}{2}.
\eeq
for (ii).
Since \eqref{Proof-Thm-1-T-2000} or \eqref{Proof-Thm-1-T-3000} 
holds for $T=\infty$ and sufficiently small $\dot{D}^{\mu_0}>0$, 
We obtain the global solutions under the condition (i) or (ii).

(4) 
The required results follow directly from Lemma \ref{Lem-10} and $h(u)\in L^1((0,\infty),H^\mu)$ as we have shown in \eqref{Proof-Thm-1-7000}.

(5) 
The local in time solution is obtained in (1) by setting $\mu_0=\mu=0$. 
Integrating the both sides in the equation 
$\partial_0{\tilde{e}}^0+\partial_j e^j+\tilde{e}^{n+1}=0$ 
in (3) in Lemma \ref{Lem-4}, 
we have 
\[
\int_\brn \tilde{e}^0(t) dx
=
\frac{1}{2c^2}\|\partial_t u(t)\|_2^2+\frac{1}{2}\|\nabla u(t)\|_2^2
+
\lambda\left\|r_0\real u(t)+\frac{|u(t)|^2}{2}\right\|_{L^2}^2
=
\int_\brn \tilde{e}^0(0) dx
\]
by the divergence theorem, 
where we have used 
\[
\tilde{e}^0
=
\frac{1}{2c^2}|\partial_t u|^2
+\frac{1}{2} |\nabla u|^2
+
\lambda
\left(
r_0\real u+\frac{1}{2}|u|^2
\right)^2
\]
and $\tilde{e}^{n+1}=0$ by $H=0$.
So that, 
$\|\partial_t u(t)\|_2$ and 
$\|\nabla u(t)\|_2$ are uniformly bounded.
In addition, 
$\|u(t)\|_2$ does not blow up 
since 
$\|\partial_t u(t)\|_2$ is bounded by 
\[
\|u(t)\|_2\le \|u(0)\|_2+\int_0^t\|\partial_t u(s)\|_2 ds.
\]
Since the existence time of our solutions obtained in (1) is taken by the size of the norm of the data 
$\dot{D}^0$, 
we are able to show the existence of the global solution connecting the local solution.

%

\newsection{Proof of Theorem \ref{Thm-2}}
\label{Section-Proof-Thm-2}
The proof of Theorem \ref{Thm-2} follows analogously to that of Theorem \ref{Thm-1}.
We only focus on the essential parts to prove (1).

(1) 
We consider the operator $\Phi$ 
defined by \eqref{Def-Phi}, 
and we show that $\Phi$ is a contraction mapping on 
the closed ball defined by \eqref{Def-XTR} 
for some $T>0$, $R_\nu>0$, $\nu=0,\mu_0,\mu$, 
where $\|\cdot\|_{\dot{X}^\nu}$ is defined by \eqref{Thm-2-X}.
We define $\theta$, $r_\ast$, $r_{\ast\ast}$ by \eqref{Proof-Thm-1-500}.
Since we have the property \eqref{Proof-Thm-1-1000}, 
we obtain the estimates \eqref{Proof-Thm-1-1500} and \eqref{Proof-Thm-1-1700} by the same argument.
For any $q_0$ with $2\le q_0\le \infty$, 
assume $3-n+2\mu_0\le q_0$ when $\mu_0>(n-3)/2$.
For any $q$ with 
\[
0\le \frac{1}{q}\le \min\left\{\frac{1}{2},1-\frac{3-n+2\mu_0}{q_0} \right\},
\]
we define $q_\ast$ by 
\beq
\label{Def-qAst-Thm-2}
\frac{1}{q_\ast}:=\frac{1}{q'}-\frac{3-n+2\mu_0}{q_0}.
\eeq
We note that $q_\ast$ satisfies $1/q'=1/q_\ast+3(1-\theta)/q_0$, 
and 
$q'\le q_\ast\le \infty$ holds by the conditions on $q_0$ and $q$.
Thus, we have 
\begin{eqnarray}
&&
c(-H)^{-1/q}\|e^{-(n-1)Ht} |u|^2u\|_{L^{q'} \dot{H}^\nu}
\nonumber
\\
&\lesssim&
B\|A\|_{q_\ast}
\left\{
(-H)^{1/q_0}Q^{1/2} 
\|e^{Ht}u\|_{L^{q_0}\dot{H}^{\mu_0}}
\right\}^{2(1-\theta)} 
\|u\|_{L^\infty \dot{H}^{\mu_0+1}}^{2\theta}
\nonumber\\
&&
\ \ \ \ 
\cdot
\left\{
(-H)^{1/q_0}Q^{1/2} 
\|e^{Ht}u\|_{L^{q_0}\dot{H}^{\nu}}
\right\}^{1-\theta} 
\|u\|_{L^\infty \dot{H}^{\nu+1}}^{\theta }
\nonumber
\\
&\le& 
B\|A\|_{q_\ast}
\|u\|_{\dot{X}^{\mu_0}}^{2} \|u\|_{\dot{X}^{\nu}} 
\nonumber
\\
&\le& 
B\|A\|_{q_\ast}R_{\mu_0}^{2} R_\nu
\label{Proof-Thm-2-3000}
\end{eqnarray}
for $\nu=0,\mu_0,\mu$, and any $u\in X^\mu(T,R_0,R_{\mu_0},R_\mu)$,
where we have put 
\beq
A=A(t):=e^{-2(1+\mu_0) Ht }
\ \ 
\mbox{and}
\ \ 
B:=c(-H)^{-1+1/q_\ast} Q^{(n-3-2\mu_0)/2}.
\eeq
We note 
\[
\|A\|_{L^{q_\ast}((0,T))}
=
\begin{cases}
e^{-2(1+\mu_0)HT} & \mbox{if}\ \ q_\ast=\infty,
\\
\left\{
-\frac{e^{-2(1+\mu_0)Hq_\ast T}-1}{2(1+\mu_0)Hq_\ast}
\right\}^{1/q_\ast}
& \mbox{if}\ \ q_\ast<\infty.
\end{cases}
\]

We define $\tilde{\theta}$, $\tilde{r}_\ast$, $\tilde{r}_{\ast\ast}$ by \eqref{Proof-Thm-1-3500}.
Since we have the property \eqref{Proof-Thm-1-4000}, 
we obtain the estimates \eqref{Proof-Thm-1-4500} and \eqref{Proof-Thm-1-4700} by the same argument.
For any $\tilde{q}_0$ with $2\le \tilde{q}_0\le \infty$, 
assume 
$(4-n+2\mu_0)/2\le \tilde{q}_0$ when $\mu_0>(n-4)/2$.
For any $\tilde{q}$ with 
\[
0\le \frac{1}{\tilde{q}}\le \min\left\{\frac{1}{2},1-\frac{4-n+2\mu_0}{2\tilde{q}_0} \right\},
\]
we define $\tilde{q}_\ast$ by 
\beq
\label{Def-TilQAst-Thm-2}
\frac{1}{\tilde{q}_\ast}:=\frac{1}{\tilde{q}'}-\frac{4-n+2\mu_0}{2\tilde{q}_0}.
\eeq
We note that $\tilde{q}_\ast$ satisfies $1/\tilde{q}'=1/\tilde{q}_\ast+2(1-\tilde{\theta})/\tilde{q}_0$, 
and 
$\tilde{q}'\le \tilde{q}_\ast\le \infty$ holds by the conditions on $\tilde{q}_0$ and $\tilde{q}$.
Thus, we have 
\begin{eqnarray}
&&
c(-H)^{-1/\tilde{q}}\|e^{-(n-2)Ht/2} (2u\real u+|u|^2)\|_{L^{\tilde{q}'} \dot{H}^\nu}
\nonumber
\\
&\lesssim&
\tilde{B}\|\tilde{A}\|_{\tilde{q}_\ast}
\left\{
(-H)^{1/\tilde{q}_0}Q^{1/2} 
\|e^{Ht}u\|_{L^{\tilde{q}_0}\dot{H}^{\mu_0}}
\right\}^{1-\tilde{\theta}} 
\|u\|_{L^\infty \dot{H}^{\mu_0+1}}^{\tilde{\theta}}
\nonumber\\
&&
\ \ \ \ 
\cdot
\left\{
(-H)^{1/\tilde{q}_0}Q^{1/2} 
\|e^{Ht}u\|_{L^{\tilde{q}_0}\dot{H}^{\nu}}
\right\}^{1-\tilde{\theta}} 
\|u\|_{L^\infty \dot{H}^{\nu+1}}^{\tilde{\theta} }
\nonumber
\\
&\le& 
\tilde{B}\|\tilde{A}\|_{\tilde{q}_\ast}
\|u\|_{\dot{X}^{\mu_0}} \|u\|_{\dot{X}^{\nu}} 
\nonumber
\\
&\le& 
\tilde{B}\|\tilde{A}\|_{\tilde{q}_\ast}R_{\mu_0} R_\nu
\label{Proof-Thm-2-4000}
\end{eqnarray}
for $\nu=0,\mu_0,\mu$, and any $u\in X^\mu(T,R_0,R_{\mu_0},R_\mu)$,
where we have put 
\beq
\tilde{A}=\tilde{A}(t):=e^{-(1+\mu_0) Ht }
\ \ 
\mbox{and}
\ \ 
\tilde{B}:=c(-H)^{-1+1/\tilde{q}_\ast} Q^{(n-4-2\mu_0)/4}.
\eeq
We note 
\[
\|\tilde{A}\|_{L^{\tilde{q}_\ast}((0,T))}
=
\begin{cases}
e^{-(1+\mu_0)HT} & \mbox{if}\ \ \tilde{q}_\ast=\infty,
\\
\left\{
-\frac{e^{-(1+\mu_0)H\tilde{q}_\ast T}-1}{(1+\mu_0)H\tilde{q}_\ast}
\right\}^{1/\tilde{q}_\ast}
& \mbox{if}\ \ \tilde{q}_\ast<\infty.
\end{cases}
\]

Since we have 
\begin{multline*}
\|\Phi(u)\|_{\dot{X}^\nu}
\lesssim 
\frac{1}{c}\|u_1\|_{\dot{H}^\nu}
+
\|\nabla u_0\|_{\dot{H}^\nu}
+
\sqrt{Q}\|u_0\|_{\dot{H}^\nu}
\\
+
\lambda c(-H)^{-1/q}\|e^{-(n-1)Ht} |u|^2 u\|_{L^{q'}\dot{H}^\nu}
\\
+
\lambda r_0 c(-H)^{-1/\tilde{q} }\|e^{-(n-2)Ht/2} (2u\real u+|u|^2) \|_{L^{\tilde{q}'}\dot{H}^\nu}
\end{multline*}
by Lemma \ref{Lem-5}, 
we have 
\beq
\label{Proof-Thm-2-8000}
\|\Phi(u)\|_{\dot{X}^\nu}
\le 
C_0\dot{D}^\nu
+
C\lambda 
\left(
\|A\|_{q_\ast}BR_{\mu_0}
+
r_0\|\tilde{A}\|_{\tilde{q}_\ast}\tilde{B}
\right)
R_{\mu_0}R_\nu
\le R_\nu
\eeq
for $\nu=0,\mu_0,\mu$ 
for some constants $C_0>0$, $C>0$, 
and any $u\in X^\mu(T,R_0,R_{\mu_0},R_\mu)$ 
by \eqref{Proof-Thm-2-3000} and \eqref{Proof-Thm-2-4000} 
if $R_0$, $R_{\mu_0}$ and $R_\mu$ satisfy 
\beq
\label{Proof-Thm-2-9000}
R_\nu\ge 2C_0 \dot{D}^\nu,
\ \ 
C\lambda 
\left(
\|A\|_{q_\ast}BR_{\mu_0}
+
r_0\|\tilde{A}\|_{\tilde{q}_\ast}\tilde{B}
\right)
R_{\mu_0}
\le \frac{1}{2}
\eeq
for $\nu=0,\mu_0,\mu$.
On the metric, we are able to obtain 
\beq
\label{Proof-Thm-2-10000}
d(\Phi(u),\Phi(v))
\le 
C\lambda 
\left(
\|A\|_{q_\ast}BR_{\mu_0}
+
r_0\|\tilde{A}\|_{\tilde{q}_\ast}\tilde{B}
\right)
R_{\mu_0}d(u,v)
\le \frac{1}{2} \, d(u,v)
\eeq
for any $u,v\in X(T,R_0,R_{\mu_0},R_\mu)$ analogously to \eqref{Proof-Thm-1-10000}, 
provided the second condition in \eqref{Proof-Thm-2-9000}.
So that, $\Phi$ is a contraction mapping on $X(T,R_0,R_{\mu_0},R_\mu)$ 
under  \eqref{Proof-Thm-2-9000}.
Especially, \eqref{Proof-Thm-2-9000} holds if $T>0$ is sufficiently small such that 
\begin{multline*}
C\lambda c(-H)^{-1}
\left[
\left\{
\frac{e^{-4(1+\mu_0)HT}-1}{4(1+\mu_0)}
\right\}^{1/2}
Q^{(n-3-2\mu_0)/2} R_{\mu_0}
\right.\\
+
\left.
r_0
\left\{
\frac{e^{-2(1+\mu_0)HT}-1}{2(1+\mu_0)}
\right\}^{1/2}
Q^{(n-4-2\mu_0)/4}
\right]
\le \frac{1}{2}
\end{multline*}
when $q_0=\tilde{q}_0=\infty$ and $q=\tilde{q}=q_\ast=\tilde{q}_\ast=2$, 
and 
$R_\nu=2C_0\dot{D}^\nu$ for $\nu=0,\mu_0,\mu$.

\vspace{10pt}

%

{\bf Acknowledgment.}
This work was supported by JSPS KAKENHI Grant Number 16H03940.

%
{\small 

}

\end{document}